\newtheorem{Theorem}{Theorem}[section]
\newtheorem{proposition}[Theorem]{Proposition}
\newtheorem{definition}[Theorem]{Definition}
\newtheorem{corollary}[Theorem]{Corollary}
\newtheorem{lemma}[Theorem]{Lemma}
\theoremstyle{remark}
\newtheorem{Remark}[Theorem]{Remark}
\begin{document}

\title[Characterization of Cyclically Fully commutative elements]
{Characterization of Cyclically Fully commutative elements  in finite and affine Coxeter Groups}

\author[Mathias P\'etr\'eolle]{Mathias P\'etr\'eolle}
\address{Institut Camille Jordan, Universit\'e Claude Bernard Lyon 1,
69622 Villeurbanne Cedex, France}
\email{petreolle@math.univ-lyon1.fr}
\urladdr{http://math.univ-lyon1.fr/{\textasciitilde}petreolle}

\thanks{}

\date{}

\subjclass[2013]{}

\keywords{Cyclically fully commutative elements, Coxeter groups, generating functions, affine Coxeter groups, heaps.}

\begin{abstract}
An element of a Coxeter group $W$ is fully commutative if any two of its reduced decompositions are related by a series of transpositions of adjacent commuting generators. An element of a Coxeter group $W$ is cyclically fully commutative if any of its cyclic shifts remains fully commutative. These elements were studied by Boothby \emph{et al}. In particular the authors enumerated cyclically fully commutative elements in all Coxeter groups having a finite number of them. In this work we characterize and enumerate cyclically fully commutative elements according to their Coxeter length in all finite or affine Coxeter groups by using a new operation on heaps, the cylindric transformation. In finite types, this refines the work of Boothby \emph{et al.}, by adding a new parameter. In affine type, all the results are new. In particular, we prove that there is a finite number of cyclically fully commutative logarithmic elements in all affine Coxeter groups. We study afterwards the cyclically fully commutative involutions and prove that their number is finite in all Coxeter groups.
\end{abstract}

\maketitle

\section*{Introduction}

Let $W$ be a Coxeter group. An element $w \in W$ is said to be fully commutative (FC) if any reduced word for $w$ can be obtained from any other one by only short braid relations. These elements were introduced and studied independently by Fan in \cite{FAN2}, Graham in \cite{GRA} and Stembridge in \cite{STEM1,STEM2,STEM3}, where among other things, he classified the Coxeter groups with a finite number of fully commutative elements and enumerated them in each case. Fully commutative elements appear naturally in the context of (generalized) Temperley-Lieb algebras, as they index a linear basis of the latter. Recently, in \cite{BJN2},  Biagioli, Jouhet and Nadeau refined Stembridge's enumeration by counting fully commutative elements according to their Coxeter length in any finite or affine Coxeter group.
\\ In this paper, we focus on a certain subset of fully commutative elements, the cyclically fully commutative  elements (which we denote CFC from now on).  These are elements for which every cyclic shift of any reduced expression is a reduced expression of a FC element. They were introduced by Boothby et al. in \cite{BBEEGM}, where they classified the Coxeter groups with a finite number of CFC elements (they showed that the latter are the groups with a finite number of FC elements) and enumerated them. The main goal of \cite{BBEEGM} is to establish necessary and sufficient conditions for a CFC element $w \in W$ to be logarithmic, i.e to satisfy $\ell(w^k)=k \ell(w)$ for any positive integer $k$. This is the first step towards studying a cyclic version of Matsumoto's theorem for CFC elements. Here we will focus on the combinatorics of CFC elements. We introduce a new operation on heaps, which will allow us to give a new characterization of CFC elements in all Coxeter groups (see Theorem~\ref{propcfc}). For finite or affine Coxeter groups, this characterization can be reformulated in terms of words, by using the work on FC elements from \cite{BJN2}. It allows us to enumerate the CFC elements  by taking into account their Coxeter length. We will also prove that the number of CFC involutions is finite in all Coxeter groups.

This paper is organized as follows.
 We recall in Section~1 some definitions and properties on Coxeter groups, we introduce a new operation on heaps, the \emph{cylindric transformation}, in two different ways, and we deduce a new characterization of CFC elements in terms of pattern-avoidance for cylindric transformed heaps (see Theorem~\ref{propcfc}).
In Section~2, we use this characterization in order to obtain a complete classification (in terms of words) of CFC elements in the affine group $\tilde{A}_{n-1}$. We also deduce a classification of CFC elements in the group $A_{n-1}$, and use this to enumerate CFC elements in both groups, according to their Coxeter length. The same work is done for the groups $\tilde{C}_n,~B_{n},~D_{n+1},~\tilde{B}_{n+1}$, and  $\tilde{D}_{n+2}$ in Section~3. In Section~4, we will focus on CFC involutions. The main result is that there is a finite number of CFC involutions in all Coxeter groups. We also give a characterization of CFC involutions for all Coxeter groups and enumerate them in finite and affine types.

\label{sec:intro}

\section{Cyclically fully commutative elements and heaps}
\label{sec:preliminaries}

\subsection{Cyclically fully commutative elements}
~\\Let $W$ be a Coxeter group with a finite generating set $S$ and Coxeter matrix $M=(m_{st})_{s,t \in S}$. Recall (see \cite{BB}) that this notation means that the relations between generators are of the form $(st)^{m_{st}}=1$ for $m_{s,t} \neq \infty$, $M$ is a symmetric matrix with $m_{ss}= 1$ and  $m_{st} \in \{ 2, 3, \ldots\} \cup \{  \infty \}$. The pair ($W, S$) is called a Coxeter system. We can write the relations as $sts\ldots =tst \ldots$, each side having length $m_{st}$; these are usually called braid relations. When $m_{st}=2$, this is a commutation relation or a short braid relation. We define the Coxeter diagram $\Gamma$ associated to ($W,S$) as the graph with vertex set $S$, and for each pair ($s,t$) with $m_{st} \geq 3$, an edge between $s$ and $t$ labelled by $m_{st}$. If $m_{st}=3$, we usually omit the labelling.
\\For $w \in W$, we define the Coxeter length $\ell(w)$ as the minimum length of any expression ${\bf w}=s_1 \ldots s_n$ with $s_i  \in S$. An expression is called reduced if it has a minimal length. The set of all reduced expressions of $w$ is denoted by $R(w)$. A classical result in Coxeter group theory, known as Matsumoto's theorem, is that any expression in $R(w)$ can be obtained from any other one using only braid relations. An element $w$ is said to be fully commutative  if any expression in $R(w)$ can be obtained from any other one using only commutation relations. For clarity, we will write $w$ for an element in $W$, and $\bf w$ for one of its words (or expressions).
\begin{definition} \label{defcfc}(\cite[Definition 3.4]{BBEEGM}) Given an expression ${\bf w}=s_{i_{1}} \dots s_{i_n}$, we define the left (\emph{resp.} right) cyclic shift of $\bf w$ as $s_{i_2} \dots s_{i_n} s_{i_1} $ (\emph{resp.} $s_{i_n} s_{i_1} \dots s_{i_{n-1}} $). A cyclic shift of $\bf w$ is  $s_{i_k} \dots s_{i_{k-2}} s_{i_{k-1}} $ for $k \in \mathbb{N}$. An element $w \in W$ is said to be cyclically fully commutative if every cyclic shift of any expression in $R(w)$ is a reduced expression for a fully commutative element.\end{definition}
We denote the set of CFC elements of $W$ by $W^{CFC}$.\\~
\subsection{Heaps and FC elements}
~\\We follow Stembridge~\cite{STEM1} in this section. Fix a word $\mathbf{w}=(s_{a_1},\ldots, s_{a_\ell})$ in $S^*$, the free monoid generated by $S$. We define a partial ordering of the indices $\{1,\ldots, \ell\}$ by $i\prec j$ if $i<j$ and $m(s_{a_i},s_{a_j})\geq 3$ and extend by transitivity on a partial ordering. We denote by $H_{\mathbf{w}}$ this poset together with the ``labeling'' $i\mapsto s_{a_i}$: this is the  \emph{heap} of $\mathbf{w}$. We will often call the elements of $H_{\bf w}$ points. In the Hasse diagram of $H_\mathbf{w}$, elements with the same labels will be drawn in the same column. The size $|H|$ of a heap $H$ is its cardinality. Given any
subset $I \subset S$, we will note $H_I$ the subposet induced by all elements of $H$ with labels in I. In particular $H_{s}:=H_{\{s\}}$ for $s \in S$ is the chain $H_s = s^{(1)} \prec s^{(2)} \prec \cdots \prec s^{(k)}$ where $k = |H_s |$. We also denote by  $|\mathbf{w}_s|$ the number $|H_s|$ (note that this also counts the number of occurences of s in {\bf w}).  In $H$, we say that there is a {\emph{chain covering relation}} between $i$ and $j$, denoted by $i \prec_c j$, if $ i \prec j $ and one of the two following conditions is satisfied:
\begin{itemize}
\item $m_{s_{a_i}s_{a_j}}\geq 3$ and there is no element $z$ with the same label as $i$ or $j$ such that $i \prec z \prec j$,
\item  $s_{a_i}= s_{a_j}$ and there is no element $z$ such that $i \prec z \prec j$.
\end{itemize}
Note that the set of chain covering relations corresponds to the set of edges in the corresponding heap. In other words, this set is equal to the set of covering relations of all subposets $H_{\{s,t\}}$, with $m_{st} \geq 3$.  In Figure~\ref{exheap}, we fix a Coxeter diagram on the left, and we give two examples of words with the corresponding heaps.

\begin{figure}[!h]

\includegraphics[scale=0.99]{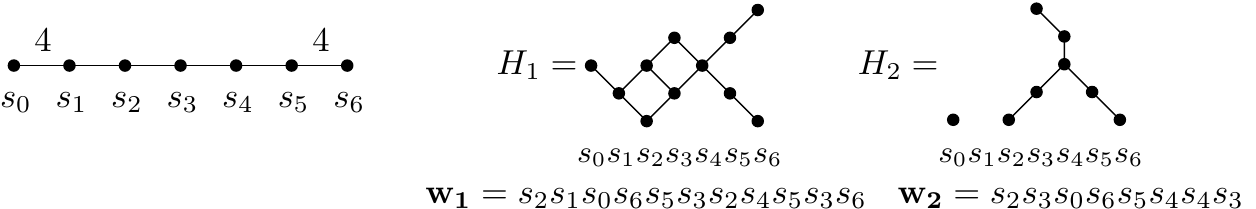}
\caption{\label{exheap}Two different heaps.}

\end{figure}

If we consider heaps up to poset isomorphisms which preserve the labels, then heaps encode precisely {\em commutativity classes}, that is, if the word ${\bf w}'$ is obtainable from $\bf w$ by transposing commuting generators then there exists a poset isomorphism between $H_{\mathbf{w}}$ and $H_{\mathbf{w'}}$. In particular, if $w$ is fully commutative, the heaps of the reduced words are all isomorphic, and thus we can define the heap of $w$, denoted $H_w$.

Let ${\bf w}=(s_{a_1},\ldots, s_{a_\ell})$ be a word. A \emph{linear extension} of the poset $H_{\bf w }$ is a linear ordering $\pi$ of $\{1, \ldots, \ell \}$ such that $\pi(i)<\pi(j)$ implies $i\prec j$. Now let $\mathcal{L}(H_{\bf w})$ be the set of words $(s_{a_{\pi(1)}},\ldots, s_{a_{\pi(\ell)}})$ where $\pi$ ranges over all linear extensions of $H_{\mathbf{w}}$.
\begin{proposition}[\cite{STEM1}, Proposition 1.2] Let $w$ be a fully commutative element. Then
$\mathcal{L}(H_{\mathbf{w}})$ is equal to $\mathcal{R}(w)$ for some (equivalently every)  ${\mathbf{w}} \in \mathcal{R}(w)$.
\end{proposition}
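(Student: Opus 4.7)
The plan is to establish the two inclusions $\mathcal{L}(H_{\mathbf{w}}) \subseteq \mathcal{R}(w)$ and $\mathcal{R}(w) \subseteq \mathcal{L}(H_{\mathbf{w}})$ by identifying both sides with the commutativity class of $\mathbf{w}$ in the free monoid $S^*$. The essential observation, already signalled in the paragraph preceding the statement, is that heaps up to label-preserving isomorphism classify commutativity classes; the full commutativity hypothesis then identifies the commutativity class of $\mathbf{w}$ with the entire set $\mathcal{R}(w)$.

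For the inclusion $\mathcal{L}(H_{\mathbf{w}}) \subseteq \mathcal{R}(w)$, I would start from the identity linear extension, which produces $\mathbf{w}$ itself, and invoke the standard fact that any two linear extensions of a finite poset can be joined by a chain of adjacent transpositions that each pass through linear extensions. A swap of $\pi(i)$ and $\pi(i+1)$ remains a linear extension only if these two indices are incomparable in $H_{\mathbf{w}}$. By the definition of the heap order, incomparability of two such neighbouring indices forces their labels $s,t$ to satisfy $m_{st}=2$; equivalently, the corresponding step on the word is a commutation relation. Since commutation relations preserve both reducedness and the underlying element, iterating shows that the word read off from any linear extension $\pi$ lies in $\mathcal{R}(w)$.

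For the reverse inclusion I would use full commutativity directly: by definition, every $\mathbf{w}' \in \mathcal{R}(w)$ is obtained from $\mathbf{w}$ by a sequence of short braid (commutation) moves. Each such move swaps two adjacent letters with commuting labels, which by the definition of $\prec$ corresponds to transposing two adjacent and incomparable indices in the current reading order. Such a transposition sends one linear extension of $H_{\mathbf{w}}$ to another, so by induction on the number of moves, $\mathbf{w}'$ is read off from some linear extension of $H_{\mathbf{w}}$.

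The ``for some (equivalently every)'' clause follows from the remark already present in the text: two commutation-equivalent reduced words have label-preservingly isomorphic heaps, so they produce identical sets of words when their linear extensions are read off. The only slightly nontrivial ingredient is the linear-extension connectivity lemma invoked in the first inclusion; everything else is a dictionary between the poset and word viewpoints, and I do not anticipate any real obstacle beyond checking carefully that a local transposition of adjacent incomparable heap elements does correspond to a commutation of adjacent letters in the word, which is immediate from the definition $i\prec j$ (for adjacent $i<j$) iff $m(s_{a_i},s_{a_j})\geq 3$.
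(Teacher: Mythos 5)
The paper does not prove this proposition at all --- it is quoted verbatim from Stembridge \cite{STEM1} --- so there is no in-text argument to compare against; your reconstruction is the standard one (linear extensions of the heap are connected by adjacent transpositions of incomparable, hence commuting, letters, and full commutativity supplies the reverse inclusion) and is correct. No gaps worth flagging beyond the routine check, which you acknowledge, that adjacent comparable elements in a reading order must be directly related and hence have $m_{st}\geq 3$.
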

We say that a chain $i_1\prec \cdots \prec i_m$ in a poset $H$ is convex if the only elements $u$ satisfying $i_1\preceq u\preceq i_m$ are the elements $i_j$ of the chain. The next result characterizes {\em FC heaps}, namely the heaps representing the commutativity classes of FC elements.

\begin{proposition}[\cite{STEM1}, Proposition 3.3]
\label{propfc}
A heap $H$ is the heap of some FC element if and only if $(a)$ there is no convex chain $i_1\prec_c \cdots\prec_c i_{m_{st}}$ in $H$ such that $s_{i_1}=s_{i_3}=\cdots=s$ and   $s_{i_2}=s_{i_4}=\cdots=t$ where $3\leq m_{st}<\infty$, and $(b)$ there is no chain covering relation $i\prec_c j$ in $H$ such that $s_i=s_j$.
\end{proposition}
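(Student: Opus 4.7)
The plan is to prove each direction of the equivalence separately, with the forward direction being a direct argument from the definition and the backward direction proceeding by induction on $|H|$.

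For the forward direction, I would assume $H = H_w$ for some FC element $w$ and derive (a) and (b) by contradiction. If $i \prec_c j$ in $H$ with $s_i = s_j = s$, the definition of chain covering ensures no element of label $s$ lies strictly between $i$ and $j$; combined with $H_s$ being a chain, one can construct a linear extension of $H$ placing $i$ and $j$ at consecutive positions, producing by Proposition 1.2 a word in $\mathcal{R}(w)$ containing the factor $ss$, contradicting reducedness. If a convex chain $i_1 \prec_c \cdots \prec_c i_{m_{st}}$ with alternating labels $s,t,s,\ldots$ exists, convexity allows one to construct a linear extension in which these $m_{st}$ indices appear consecutively; applying the braid relation in the resulting reduced word produces a second reduced word for $w$ whose underlying heap has different labels at those positions and hence lies in a different commutativity class, contradicting full commutativity.

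For the backward direction, I would induct on $|H|$, the case $|H| = 0$ being trivial. For the inductive step, pick any maximal element $i \in H$, let $s = s_i$, and set $H' = H \setminus \{i\}$. The conditions (a) and (b) are inherited by $H'$, so by induction there is an FC element $w'$ with $H_{w'} = H'$. Choosing any $\mathbf{w}' \in \mathcal{R}(w')$, the concatenation $\mathbf{w} = \mathbf{w}' \cdot s$ is a linear extension of $H$, and I would show that $\mathbf{w}$ is a reduced expression for an FC element $w$ with $H_w = H$. Reducedness is shown by contradiction: if $\ell(w' s) < \ell(w')$, the strong exchange property combined with the FC structure of $w'$ forces some point $j \in H'$ of label $s$ with no intermediate point of label $s$ between it and $i$, producing a chain covering $j \prec_c i$ in $H$ that violates (b). Full commutativity follows from a parallel argument: any non-commutation braid move between two reduced expressions of $w$ can be traced back to a subword $sts\cdots$ of length $m_{st}$ appearing consecutively in some element of $\mathcal{R}(w)$, and this corresponds under the labelled-heap correspondence to a convex chain in $H$ with alternating labels and consecutive chain coverings of length $m_{st}$, violating (a). That $H_{\mathbf{w}} = H$ is automatic since commutation moves preserve the labelled heap.

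The main obstacle will be the backward direction, specifically the step of translating a failure of reducedness or of full commutativity at the word level into a configuration forbidden by (a) or (b) at the heap level. This requires carefully tracking how braid moves act on heaps, and in particular understanding when a subword of the form $sts\cdots$ or $ss$ can be brought into consecutive position by commutation moves alone, which is exactly where convexity of the corresponding chain in $H$ enters. A clean way to manage this is to prove a preliminary lemma stating that the consecutive occurrence of a forbidden factor in \emph{some} element of a commutativity class is equivalent to the existence of the corresponding convex chain in the common heap, and then apply this lemma at each step of the induction.
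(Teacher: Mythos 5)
First, a point of comparison: the paper does not prove this statement at all --- it is quoted from Stembridge \cite{STEM1} and used as a black box --- so there is no in-paper argument to measure yours against; the assessment below is of your proposal on its own terms, against the standard proof.

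Your outline is essentially Stembridge's argument and most of it is sound: the forward direction (use convexity to bring the forbidden configuration into consecutive positions of a linear extension, then invoke $\mathcal{L}(H_w)=\mathcal{R}(w)$), the inheritance of $(a)$ and $(b)$ by $H'=H\setminus\{i\}$ for $i$ maximal (removing a maximal element neither creates new chain covering relations nor destroys convexity, since any blocking element $z=i$ would have to satisfy $i\prec k$ for some $k\in H'$), and the reducedness step via the exchange condition all work. The one genuine soft spot is the full-commutativity step of the backward direction. You say a failed commutation ``can be traced back to'' a factor $st s\cdots$ of length $m_{st}$ in \emph{some} element of $\mathcal{R}(w)$, and that this ``corresponds to a convex chain in $H$.'' But at that stage you do not yet know $w$ is FC, so you do not know that the offending reduced word is commutation-equivalent to $\mathbf{w}$; a priori its heap is not $H$, and your preliminary lemma (forbidden factor in some member of a commutativity class $\Leftrightarrow$ convex chain in the common heap) only applies within the class of $\mathbf{w}$. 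The standard repair is a closure argument: by $(a)$ and your lemma, no member of $\mathcal{L}(H)$ contains a long braid factor, so the only braid moves applicable to words in $\mathcal{L}(H)$ are commutations, which preserve $\mathcal{L}(H)$; since by Tits/Matsumoto every element of $\mathcal{R}(w)$ is reachable from $\mathbf{w}$ by braid moves through reduced words, induction along such a path yields $\mathcal{R}(w)\subseteq\mathcal{L}(H)$, hence $\mathcal{R}(w)=\mathcal{L}(H)$ is a single commutativity class, $w$ is FC, and $H_w=H$. With that bridge made explicit your proof is complete (and the induction on $|H|$ is then needed only for reducedness).
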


\subsection{Cylindric transformation of heaps and CFC elements}
Now, we will focus on CFC elements.  Before this, we need to define a new operation on heaps.

\begin{definition}\label{def} Let $H:=H_{\bf w}$ be a heap of a word ${\bf w}=s_{a_1} \ldots s_{a_\ell}$. The cylindric transformation $ H^c$ of $H$ is the labelling $i\mapsto s_{a_i}$ and a relation on the indices $\{1, \ldots , \ell\}$, made of the chain covering relations $\prec_c$ of $H$, together with some new relations defined as follows:
\begin{itemize}
\item for each generator $s$, consider the minimal point a and the maximal point b in the chain $H_{s}$ (for the partial order $\prec$). If a is minimal and b is maximal in the poset $H$, we add a new relation $b \prec_c a$.
\item for each pair of generators (s,t) such that $m_{s,t}\geq 3$, consider the minimal point a and the maximal point b in the chain $H_{\{s,t\}}$ (for the partial order $\prec$). If these points have different labellings (one has label s and the other has label t), we add a new relation $b \prec_c a$.
\end{itemize}
\end{definition}

 \noindent \emph{Example:} for the simply laced linear Coxeter diagram with 7 generators, consider the word ${\bf w} = s_2s_1s_0s_3s_2s_6s_5s_4s_5s_6s_3$. Its heap and its cylindric transformation are represented in Figure~\ref{excylindric}.
\begin{figure}[!h]

\includegraphics[scale=1]{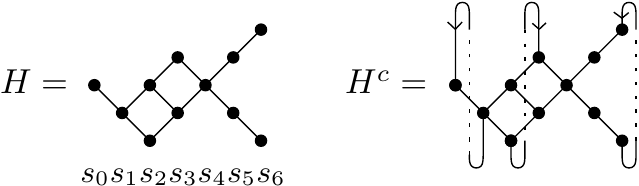}
\caption{\label{excylindric}A heap and its cylindric transformation.}

\end{figure}

\begin{Remark}
$H^c$ is not a poset, even if we extend the relation $ \prec_c$ by transitivity. Indeed, each point in $H^c$ would otherwise be in relation with itself. As $H^c$ is not a poset, we cannot draw its Hasse diagram.  We define the diagram of $H^c$ as the Hasse diagram of $H$, in which we add oriented edges representing the new relations described in Definition~\ref{def}.
\end{Remark}

Let us explain the name ``cylindric transformation" that we decided to use. Consider the Coxeter system ($W, S$) corresponding to the linear Coxeter
diagram $\Gamma_n$ of Figure \ref{Linear}.

\begin{figure}[!h]
\includegraphics{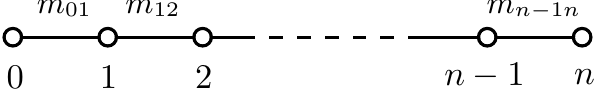}
\caption{\label{Linear}The linear Coxeter diagram.}
\end{figure}

 In this case, when we draw the diagram of $H^c$, it should be seen as drawn on a cylinder instead of a plane as for Hasse diagrams of heaps (in our cylinder, each chain $H_s$ for a generator s can be seen as a circle).

As for heaps, if $H^c$ is a cylindric transformation of a heap, the size $|H^c|$  is its cardinality. Given any subset $I \subset S$, we will write $H_I^c$ for the set of points in  $H^c$ with labels in $I$. Unlike in the definition of $H_I$, this definition does not include the relations $\prec_c$ between those points.

We will now give the construction of the cylindric transformation of a heap in terms of words.

\begin{proposition}\label{propositioncfc}
If ${\bf w}=s_{a_1}\ldots s_{a_k}$ is a word and H its heap, let  ${\bf w}_1=s_{a_1}\ldots s_{a_p}$ be the shortest prefix of $\bf w$ such that all generators which appear in $\bf w$ also appear in ${\bf w}_1$. We write ${\bf w' = w w_1}:=s_{c_1}\ldots s_{c_{k+p}}$ where we denote by $ c_i:= a_i,~1 \leq i \leq k$ and $c_{k+i}:=a_i,~1 \leq i \leq p$. Consider the heap $H_{{\bf w}'}$ of the word {\bf w'}. In $H_{{\bf w}'}$ we keep only the points and the chain covering relations, we identify the points $i$ and $k+i$ for $1 \leq i \leq p$, and we denote the resulting diagram by $H'$. Then we have $H' =H^c$.
\end{proposition}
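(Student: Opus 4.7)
The plan is to prove $H'=H^c$ by comparing their chain covering relations, since both labelled diagrams share the vertex set $\{1,\ldots,k\}$ with the same labelling $i \mapsto s_{a_i}$. The key structural observation is that $\prec$ in any heap is monotonic in the indices, so any $\prec$-path in $H_{{\bf w}'}$ between two indices of $\{1,\ldots,k\}$ must stay inside $\{1,\ldots,k\}$, and any path between two indices of $\{k+1,\ldots,k+p\}$ must stay inside that block. Consequently the subposet of $H_{{\bf w}'}$ on $\{1,\ldots,k\}$ equals $H$, and the one on $\{k+1,\ldots,k+p\}$ equals $H_{{\bf w}_1}$, which is itself the subposet of $H$ on $\{1,\ldots,p\}$.

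I would then split the chain covering relations of $H_{{\bf w}'}$ into three families: those lying entirely in $\{1,\ldots,k\}$, which coincide with the chain covering relations of $H$; those lying entirely in $\{k+1,\ldots,k+p\}$, which under the identification $(k+i)\mapsto i$ are already chain covering relations of $H$ and hence redundant; and the crossing relations $i \prec_c (k+q)$ with $i\le k < k+q \le k+p$. The crux is to match this last family with the new cylindric relations of Definition~\ref{def}. I argue by label type. If $s=s_{a_i}$ and $t=s_{a_q}$ are distinct with $m(s,t)\ge 3$, the chain-cover condition forbids any intermediate element with label $s$ or $t$; since $H_{\{s,t\}}$ is totally ordered (as $\bf w$ is FC), this forces $i$ to be the maximum and $q$ the minimum of $H_{\{s,t\}}$, recovering precisely the pair-case of Definition~\ref{def}. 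If instead $s=t$, the chain-cover condition forbids \emph{any} intermediate whatsoever; using monotonicity together with the defining property of ${\bf w}_1$, I show this is equivalent to $i$ being the last occurrence of $s$ in $\bf w$ and globally maximal in $H$, while $q$ is the first occurrence of $s$ and globally minimal in $H$, which is the same-label case of Definition~\ref{def}. The reverse inclusion — that each cylindric relation yields a crossing chain cover — follows by running these arguments backwards.

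The main obstacle is the same-label crossing case, where one must exclude every possible intermediate element, including those produced by transitive paths wrapping through the appended prefix ${\bf w}_1$. The key is the defining minimality of ${\bf w}_1$ (every generator appears at least once), which allows one to exhibit explicit intermediates as soon as either $i$ fails to be globally maximal in $H$ or $q$ fails to be globally minimal. Organising the case analysis around this observation should deliver the required equivalence and conclude the proof.
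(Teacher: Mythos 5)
Your proof follows essentially the same route as the paper's: both split the chain covering relations of $H_{{\bf w}'}$ into the two within-block families (which reduce, after identification, to relations of $H$) and the crossing family, and both match the crossing relations with the new relations of Definition~\ref{def} via the maximality/minimality of the endpoints in $H_{\{s,t\}}$ or in $H$ --- your treatment is merely more detailed, making explicit the reverse inclusion that the paper leaves as ``similar arguments''. One small correction: the total ordering of $H_{\{s,t\}}$ does not require $\bf w$ to be fully commutative (the proposition assumes no such hypothesis); it holds for any heap, since two points with labels in $\{s,t\}$ and $m_{st}\geq 3$ are always comparable.
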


\begin{Remark}Note that $H'$ define in Proposition~\ref{propositioncfc} is not a poset, although $H_{{\bf w'}}$ is.
\end{Remark}

\begin{figure}[!h]
\includegraphics[scale=1.2]{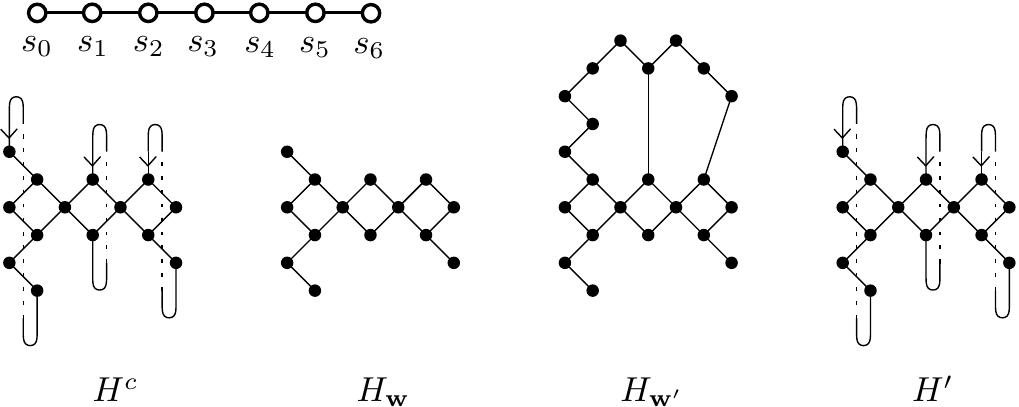}
\caption{ Illustration of Proposition~\ref{propositioncfc} for 
${\bf w}= s_1s_0s_1s_3s_2s_6s_5s_4s_6s_5s_3s_0s_1s_0$.}
\hspace*{-3.4cm} We have ${\bf w}_1= s_1s_0s_1s_3s_2s_6s_5s_4$.
\end{figure}

\begin{proof} We will first prove that the chain covering relations $\prec_c$ in $H'$ are included in those of $H^c$.  There are three types of chain covering relations in $H_{\bf w'}$:
\begin{itemize}
\item The ones between two points $i$ and $j$ such that $1\leq i,j\leq k$. They correspond to chain covering relations which also appear in $H$, and are therefore present in $H^c$.
\item The ones between two points $i$ and $j$ such that $k+1\leq i,j\leq k+p$. After the identification between $i$ (\emph{resp.} $j$) and $i-k$ (\emph{resp.} $j-k$), these relations also appear in $H$, and so in $H^c$.
\item The ones between two points $i$ and $j$ such that $1 \leq i \leq k$ and $k+1 \leq j \leq p$. By construction of $H'$, the point $j$ will be identified with the point $h := j-k$ which is minimal in $H_{s_{a_j}}$ (and even minimal in $H$ if $s_{a_j}=s_{a_i}$). Likewise, the point i is maximal in $H_{s_{a_i}}$ (and even maximal in $H$ if $s_{a_j}=s_{a_i}$). So, when we construct $H^c$, we will add a chain covering relation  $i \prec_c h$.
\end{itemize}
Thus the chain covering relations of $H'$ are included in those of $H^c$. The reverse inclusion can be proved by similar arguments.
\end{proof}

\begin{proposition}\label{injectif} The canonical mapping $ H \mapsto H^c$ is not injective. More precisely, if the word $\bf w_2$ is commutation equivalent to some cylic shift of the word $\bf w_1$, then $H_{\bf w_1}^c= H_{\bf w_2}^c$.
\end{proposition}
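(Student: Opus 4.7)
My plan is to prove the two invariance properties separately, and then combine them by transitivity.

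For commutation equivalence: if $\mathbf{w}_2$ is obtained from $\mathbf{w}_1$ by transposing adjacent commuting generators, then $H_{\mathbf{w}_1}$ and $H_{\mathbf{w}_2}$ are canonically isomorphic as labelled posets, as recalled in the paragraph on commutativity classes preceding Proposition~1.2. Since Definition~\ref{def} constructs $H^c$ purely from labelled poset data---the chain covering relations of $H$, together with the extrema of each subposet $H_s$ and $H_{\{s,t\}}$---this isomorphism descends to the equality $H^c_{\mathbf{w}_1} = H^c_{\mathbf{w}_2}$.

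For cyclic shifts it suffices, by iteration, to treat a single left cyclic shift, so I set $\mathbf{w} = t_1 t_2 \cdots t_\ell$ and $\mathbf{w}' = t_2 \cdots t_\ell t_1$. Here I plan to use Proposition~\ref{propositioncfc}. First I establish a refinement: the shortest prefix $\mathbf{w}_1$ of $\mathbf{w}$ containing all generators may be replaced by any longer prefix, and in particular by $\mathbf{w}$ itself, without changing the resulting quotient diagram. The idea is that the extra points brought in by the longer suffix generate only chain covering relations that either duplicate existing ones after identification, or fail to be chain coverings in the first place (being blocked by intermediate points already present in the shorter extension). Granting this, $H^c_{\mathbf{w}}$ coincides with the quotient of $H_{\mathbf{w}\mathbf{w}}$ by the identification $i \sim i+\ell$ for $1 \le i \le \ell$, and analogously $H^c_{\mathbf{w}'}$ is the quotient of $H_{\mathbf{w}'\mathbf{w}'}$ by the same rule.

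The payoff is that $\mathbf{w}\mathbf{w}$ and $\mathbf{w}'\mathbf{w}'$ are two consecutive length-$2\ell$ windows inside the thrice-repeated word $\mathbf{w}^3$, at positions $1,\ldots,2\ell$ and $2,\ldots,2\ell+1$ respectively. The natural shift $\phi$ sending position $i \ge 2$ in $\mathbf{w}$ to $i-1$ in $\mathbf{w}'$, and position $1$ to position $\ell$, is a label-preserving bijection that matches the chain covering relations of the two sub-heaps of $H_{\mathbf{w}^3}$ and commutes with the identification modulo $\ell$. Passing to quotients, it yields $H^c_{\mathbf{w}} = H^c_{\mathbf{w}'}$. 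The main obstacle will be the refinement of Proposition~\ref{propositioncfc}: one has to analyse carefully how chain covering relations arise between the first and second blocks of the doubled word and check that enlarging the prefix beyond $\mathbf{w}_1$ contributes nothing genuinely new to the quotient. Once that verification is in place, cyclic invariance follows from the manifest symmetry of the periodic word $\mathbf{w}^{\mathbb{Z}}$ and the translation-equivariance of the quotient by $\ell$.
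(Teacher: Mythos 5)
Your route is genuinely different from the paper's. The paper fixes a single right cyclic shift, writes $H=H_{s\mathbf{w}}$ and $H'=H_{\mathbf{w}s}$, and shows by a three-case analysis on the chain covering relations involving the moved letter that the relations of $H^c$ are contained in those of $H'^c$; it then closes the loop by iterating the shift all the way around the word, so that the cycle of inclusions forces equality. You instead globalize: you reduce everything to the periodic word $\mathbf{w}^{\mathbb{Z}}$, realize both $H^c_{\mathbf{w}}$ and $H^c_{\mathbf{w}'}$ as mod-$\ell$ quotients of consecutive windows of $H_{\mathbf{w}^3}$, and read off the equality from translation invariance. This is an attractive idea (it yields equality directly rather than via a cycle of inclusions, and it makes the commutation-equivalence half explicit, which the paper leaves implicit), but as written it is a plan rather than a proof: the entire load is carried by your ``refinement'' of Proposition~\ref{propositioncfc}, namely that replacing the shortest prefix $\mathbf{w}_1$ by all of $\mathbf{w}$ does not change the quotient, i.e.\ that $H^c$ equals $H_{\mathbf{w}\mathbf{w}}$ modulo $i\sim i+\ell$. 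You state what must be checked but do not check it.

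That lemma is true, and you should supply the argument, which is of the same nature and length as the paper's own case analysis. Two observations make it work. First, a chain covering relation $i\prec_c j$ in the heap of a word depends only on the letters at positions in $[i,j]$, since every relation generating $\prec$ and every blocking element $z$ with $i\prec z\prec j$ lies between $i$ and $j$; hence coverings inside a contiguous window agree with those of the window taken as a standalone word, and in particular no covering in $\mathbf{w}\mathbf{w}$ can span more than one full period (the repetition of the generator of $i$ one period later blocks it). Second, a cross-block covering $i\prec_c j$ in $H_{\mathbf{w}\mathbf{w}}$ forces $i$ to be maximal and $j-\ell$ minimal in the relevant chain $H_s$ or $H_{\{s,t\}}$, and any element witnessing a failure of the extremality hypotheses of Definition~\ref{def} sits strictly between $i$ and $j$ and blocks the covering; this is exactly the dichotomy that shows the cross-block coverings coincide with the new relations of Definition~\ref{def}, and the same reasoning shows the extra points at positions $k+p+1,\ldots,2k$ contribute nothing new after identification. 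With these two points written out, your window-shift conclusion is sound.
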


\begin{proof}
For $s$ a generator, $\bf{w}$  a word, define $H:=H_{s{\bf w}}$ and $H':=H_{{\bf w}s}$. We will prove that the chain covering relations of $H^c$ are included in those of $H'^c$. By applying this to the successive right cyclic shifts of ${\bf w}s$, we may conclude that $H^c= H'^c$, which is sufficient to prove the proposition.
\\We denote by $i$ (\emph{resp.} $i'$) the point of $H$ (\emph{resp.} $H'$) corresponding to the first (\emph{resp.} last) occurrence of s in $s{\bf w}$ (\emph{resp.} ${\bf w}s$). There are three types of chain covering relations in $H^c$:
\begin{itemize}
\item The ones between two points different from $i$; they are also present in $H'^c$.
\item The ones between $i$ and another point $j$. Two cases can occur: $i \prec_c j$ in $H $ or not. In the first case, $j$ must be minimal among all points with the same label. As $i'$ is maximal in $H'$, we add a relation $i' \prec_c j$ in $H'^c$ when we construct $H'^c$. The second case occurs when we add a relation between $i$ and $j$ during the construction of $H^c$. In this case, $j$ is maximal in $H$ among all points with the same label and so there is a relation $j \prec _c i'$ in $H'$. This relation stays naturally present in $H'^c$.
\item The ones between $i$ and itself. It is clear that $i'$ is also in relation with itself in $H'^c$.
\end{itemize}
\end{proof}

Now we characterize the CFC elements in terms of heaps.

\begin{definition}Set $H$ a heap and $s$, $t$ two generators such that $m_{st} \geq 3$. Consider a chain $i_1 \prec_c \cdots  \prec_c i_m$ in $H^c$ with $m \geq 3$, all elements $i_k$ distinct and such that $s_{a_{i_1}}=s_{a_{i_3}}=\cdots=s$ and $s_{a_{i_2}}=s_{a_{i_4}}=\cdots=t$.\\ Such a chain is called cylindric convex if the only elements $u_1, \ldots, u_d$, satisfying $i_1 \prec_c \cdots \prec_c i_k \prec_c u_1 \prec_c \cdots \prec_c u_d \prec_c i_m$ with all elements involved in this second chain distinct, are the elements $i_j$ of the first chain. 
\end{definition}
\begin{lemma}\label{Remark1} Assume there exists a convex chain $i_1 \prec_c \cdots  \prec_c i_m$ of length $m\geq 3$ in a heap $H$, such that all elements $i_k$ are distinct and $s_{a_{i_1}}=s_{a_{i_3}}=\cdots=s$ and $s_{a_{i_2}}=s_{a_{i_4}}=\cdots=t$. Then this chain is also cylindric convex in the cylindric transformation $H^c$.
\end{lemma}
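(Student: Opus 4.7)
The strategy is a proof by contradiction. Suppose the convex chain $i_1 \prec_c \cdots \prec_c i_m$ of $H$ fails to be cylindric convex in $H^c$; then there exists a chain $i_1 \prec_c \cdots \prec_c i_k \prec_c u_1 \prec_c \cdots \prec_c u_d \prec_c i_m$ in $H^c$ with all elements pairwise distinct, in which some $u_l$ does not coincide with any $i_j$. My first move is to mark each $\prec_c$-step of the refined segment as either \emph{old} (a chain-covering relation already present in $H$) or \emph{new} (one of the relations introduced by Definition~\ref{def}).

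If every step between $i_k$ and $i_m$ is old, then transitivity of the partial order of $H$ yields $i_k \prec u_1 \prec \cdots \prec u_d \prec i_m$, so each $u_l$ lies strictly between $i_1$ and $i_m$ in the partial order of $H$. The convexity hypothesis then forces each $u_l$ to coincide with some $i_j$, and the strict ordering together with the chain-covering structure of the original chain pins down $u_l = i_{k+l}$. This contradicts the assumption that some $u_l$ lies outside the chain.

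The remaining case, in which at least one new cylindric edge $u_l \prec_c u_{l+1}$ appears in the refinement (with the convention $u_0 := i_k$ and $u_{d+1} := i_m$), is where the real difficulty lies. By Definition~\ref{def}, $u_l$ is the maximum and $u_{l+1}$ the minimum of either $H_{s'}$ globally in $H$ or of $H_{\{s',t'\}}$ for some $s',t'$ with $m_{s't'}\geq 3$. Focusing on the first such new step, the old prefix $i_k \prec_c \cdots \prec_c u_l$ forces $u_l \succ i_k$ in the partial order of $H$, while $u_{l+1}$ sits at the bottom of its sub-chain. I would then chase the suffix $u_{l+1} \prec_c \cdots \prec_c i_m$ and argue that any way of climbing back to $i_m$ either revisits a previously visited element---violating the distinctness assumption---or deposits a new element into the partial-order interval $[i_1,i_m]$ in $H$, at which point the convexity argument of the all-old case applies and produces the contradiction.

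The principal obstacle, and what I expect will occupy most of the proof, is a careful case distinction on how the labels $s'$, $t'$ of the first new edge relate to the alternating labels $s$, $t$ of the convex chain, using the chain-covering constraints recalled in Proposition~\ref{propfc} to eliminate each configuration. The underlying intuition is that new edges are ``long-distance'' moves between extrema of $H$, and that a convex chain leaves no room to accommodate such a move without either repeating an element or violating its own convexity inside $H$.
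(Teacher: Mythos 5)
Your easy case (all steps of the refined chain are covering relations of $H$) is fine, but the case you defer --- where at least one new cylindric edge occurs --- is exactly where the content of the lemma lies, and your outline of it does not close. ``Chasing the suffix'' is problematic because the suffix $u_{l+1}\prec_c\cdots\prec_c i_m$ may itself contain further new edges, so it does not live in the partial order of $H$ and there is no well-defined ``climbing back to $i_m$'' inside the interval $[i_1,i_m]$; the label case distinction you announce is never carried out, and you yourself flag it as the bulk of the work. Note also that Proposition~\ref{propfc} concerns FC heaps and is not available here: the lemma is applied precisely to heaps containing forbidden convex chains, i.e.\ to non-FC words.

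The missing idea, which makes the whole old/new bookkeeping and the label analysis unnecessary, is this. Take $k$ maximal so that the refined chain agrees with the original one up to $i_k$, i.e.\ $u_1\neq i_{k+1}$. If $k\le m-2$, then $i_{k+2}$ exists, has the same label as $i_k$, and satisfies $i_k\prec i_{k+2}$; hence $i_k$ is not maximal in $H_{s_{a_{i_k}}}$, nor in any $H_{\{s,t\}}$ containing it, so by Definition~\ref{def} no new edge can start at $i_k$. The single edge $i_k\prec_c u_1$ is therefore already a covering relation of $H$, and then $i_k\prec u_1\prec i_{k+2}$ (the two points are comparable since their labels do not commute, and $i_{k+2}\prec u_1$ would contradict that covering), so $u_1$ is an element of $[i_1,i_m]$ other than $i_{k+1}$, violating convexity in $H$. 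If $k=m-1$, apply the symmetric argument to the last edge $u_d\prec_c i_m$, using that $i_m$ is not minimal because of $i_{m-2}$. Thus only one specific edge ever needs to be identified as an edge of $H$, not all of them, and no discussion of the labels of new edges is required.
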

\begin{proof}
Let $i_1 \prec_c \cdots  \prec_c i_m$ be such a convex chain in $H$ and assume there is a chain $i_1 \prec_c \cdots \prec_c i_k \prec_c u_1 \prec_c \cdots \prec_c u_d \prec_c i_m$ with all elements distinct in $H^c$, and $u_1 \neq i_{k+1}$. Two cases can occur:
\begin{itemize}
\item $i_k \neq i_{m-1}$: as $i_k$ is not a maximal element in $H_{s_{a_{i_k}}}$, the relation $i_k \prec_c u_1$ holds in $H$. Consequently, the relation $ i_k \prec_c u_1 \prec i_{k+2}$ holds in $H$. So $i_1 \prec_c \cdots  \prec_c i_m$ is not a convex chain in $H$, which is a contradiction,
\item $i_k = i_{m-1}$: as $i_m$ is not a minimal element in $H_{s_{a_{i_m}}}$, the relation $u_d \prec _c i_m$ holds in $H$. Consequently, the relation $ i_{m-2} \prec u_d \prec_c i_m$ holds in $H$. So, as $u_d \neq i_{m-1}$, $i_1 \prec_c \cdots  \prec_c i_m$ is not a convex chain in $H$, which is a contradiction.
\end{itemize}
\end{proof}

\begin{Theorem}\label{propcfc}
A heap $H$ is the heap of some word $\bf w$ corresponding to a CFC element w if and only if both the following conditions are satisfied:
\begin{itemize}
\item there is no cylindric convex chain $i_1\prec_c \cdots \prec_c i_{m_{st}}$ in the cylindric transformation $H^c$ such that $s_{a_{i_1}}=s_{a_{i_3}}=\cdots=s$ and   $s_{a_{i_2}}=s_{a_{i_4}}=\cdots=t$, where $3\leq m_{st}<\infty$;
\item there is no chain covering relation $i\prec_c j$ in the cylindric transformation $H^c$ such that $s_{a_i}=s_{a_j}$.
\end{itemize}
\end{Theorem}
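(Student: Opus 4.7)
The plan is to prove both directions using Stembridge's FC characterization (Proposition~\ref{propfc}), the cyclic invariance of the cylindric transformation (Proposition~\ref{injectif}), and Lemma~\ref{Remark1}.

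For the direction $(\Leftarrow)$, suppose $H^c$ satisfies both (a) and (b); I would show every cyclic shift $\mathbf{w}'$ of $\mathbf{w}$ is a reduced FC word. By Proposition~\ref{injectif} we have $H_{\mathbf{w}'}^c = H^c$. The chain covering relations of $H_{\mathbf{w}'}$ are, by Definition~\ref{def}, contained in those of $H_{\mathbf{w}'}^c$, so condition (b) forbids same-label chain coverings in $H_{\mathbf{w}'}$. By Lemma~\ref{Remark1}, any convex chain of $H_{\mathbf{w}'}$ is cylindric convex in $H^c$, so condition (a) forbids alternating convex chains of length $m_{st}$ in $H_{\mathbf{w}'}$. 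Proposition~\ref{propfc} then makes $H_{\mathbf{w}'}$ the heap of some FC element, whence $\mathbf{w}'$ is a reduced FC expression. Since $\mathbf{w}'$ was arbitrary, $w$ is CFC.

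For the direction $(\Rightarrow)$, I would argue contrapositively. If $i \prec_c j$ is a same-label covering in $H^c$, it is either inherited from $H$ (contradicting Proposition~\ref{propfc}(b) for $\mathbf{w}$ itself) or a wrap from the first bullet of Definition~\ref{def}; in the latter case $j$ is minimal and $i$ maximal in $H$ with common label $s$, so a linear extension of $H$ placing $j$ first and $i$ last gives a reduced word $s \cdots s$ for $w$ whose left cyclic shift ends in $ss$ and is therefore not reduced, contradicting CFC. If instead there is a cylindric convex alternating chain $i_1 \prec_c \cdots \prec_c i_{m_{st}}$ in $H^c$, the key observation is that consecutive chain points carry distinct labels, so same-label wraps cannot lie inside the chain; only the single $(s,t)$-wrap from the second bullet of Definition~\ref{def} is available, and thus at most one arrow of the chain is a new relation. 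I would then select a cyclic shift $\mathbf{w}'$ whose starting position places the target of that wrap strictly after its source in the linear order of $\mathbf{w}'$; in $H_{\mathbf{w}'}$ the chain points then appear in chain order with each $\prec_c$ realized as an original chain covering, and cylindric convexity in $H_{\mathbf{w}'}^c = H^c$ forces honest convexity in $H_{\mathbf{w}'}$. This yields a bad convex chain in $H_{\mathbf{w}'}$, violating Proposition~\ref{propfc}(a) and contradicting CFC.

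The main obstacle is the final step above: verifying rigorously that, for the chosen cyclic shift, each $\prec_c$ of the chain is an original chain covering of $H_{\mathbf{w}'}$ (so the chain actually lives inside the ordinary heap) and that cylindric convexity in $H^c$ yields honest convexity in $H_{\mathbf{w}'}$. The first point uses that at most one wrap occurs, so one cyclic rotation suffices to align all chain positions in increasing order. The second uses the invariance $H_{\mathbf{w}'}^c = H^c$ from Proposition~\ref{injectif}: any intermediate element that would break convexity in $H_{\mathbf{w}'}$ lifts, via the cylindric completion, to an inserted $\prec_c$-chain in $H^c$, contradicting the cylindric convexity of the original chain. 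Carrying out this bookkeeping carefully, especially when the positions of the chain elements are spread out across $\mathbf{w}$, is the technical heart of the argument.
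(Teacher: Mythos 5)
Your proposal follows essentially the same route as the paper's proof: both directions rest on Proposition~\ref{propfc}, Proposition~\ref{injectif} and Lemma~\ref{Remark1}, and your backward direction selects exactly the cyclic shift starting at the relevant chain element, as the paper does. The ``technical heart'' you flag (that at most one wrap arrow occurs and the chain becomes an honest convex chain in the shifted heap) is precisely the step the paper dispatches in one sentence, so your extra care there refines, rather than departs from, the published argument.
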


\begin{proof} Let $\bf w$ be a word and $H:= H_{\bf w}$ its heap. Assume that $\bf w$ is a reduced word of a non CFC element. There exists by Definition~\ref{defcfc} a cyclic shift of $\bf w$ which is commutation equivalent to ${\bf w'}= w_1 ssw_2$ or ${\bf w'}= w_1\underbrace{st \cdots  st}_{ m_{st}} w_2$. Let $H'$ be the heap of $\bf w'$. By Proposition~\ref{propfc}, in the first case, $H'$ contains a chain covering relation $i\prec_c j$ such that $s_{a_i}=s_{a_j}=s$, and in the second case, $H'$ contains a convex chain $i_1\prec_c \cdots\prec_c i_{m_{st}}$   such that $s_{a_{i_1}}=s_{a_{i_3}}=\cdots=s$ and   $s_{a_{i_2}}=s_{a_{i_4}}=\cdots=t$, where $3\leq m_{st}<\infty$. By Lemma~\ref{Remark1}, these two cases give a chain covering relation and a cylindric convex chain in $H'^c$, respectively. But ${\bf w'}$ is commutation equivalent to a cyclic shift of $\bf w$, so $H^c=H'^c$, as seen in Proposition~\ref{injectif}.
\\Conversely, if $H^c$ contains the cylindric convex chain $i_1 \prec_c \cdots\prec_c  i_{m_{st}}$, let $\bf w'$ be the cyclic shift of $\bf w$ beginning by $s_{i_1}$, and let $H'$ be its heap. As all elements in the cylindric convex chain are distinct, $H'$ contains the convex chain $i_1\prec_c \cdots\prec_c  i_{m_{st}}$. Therefore $H'$ is not FC by Proposition~\ref{propfc} and $H$ is not CFC. The same argument also holds if $H^c$ contains a relation $i\prec_c j$ such that $s_{a_i}=s_{a_j}$, by letting this time $\bf w'$ be the cyclic shift of $\bf w$ beginning by $s_{a_i}$.\end{proof}

For example, this theorem ensures that the heap in Figure~\ref{excylindric} does not correspond to a CFC element, as it contains a chain covering relation $i\prec_c j$ in the cylindric transformation $H^c$ such that $s_{a_i}=s_{a_j}= s_6$. The example in Figure \ref{excfc} corresponds to a CFC element, according to Theorem~\ref{propcfc}.

\begin{figure}[!h]
\includegraphics[scale=1]{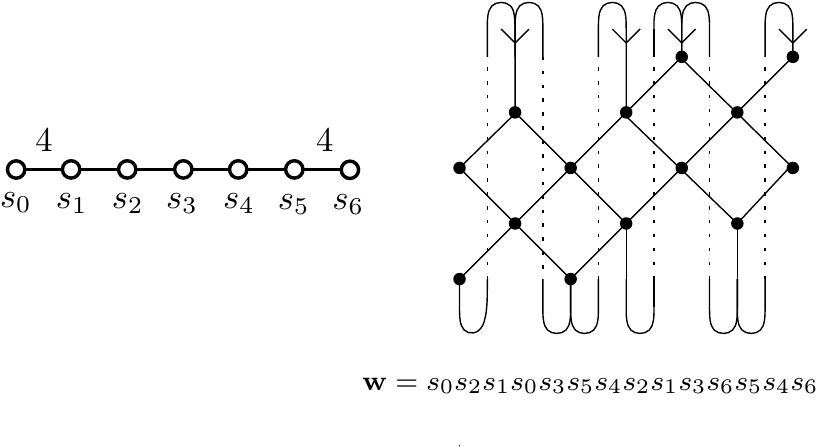}
\caption{\label{excfc}A CFC element in $\tilde{C}_6$.}
\end{figure}

Before ending this section, let us recall the set of alternating heaps which was defined in \cite{BJN2}, and will be useful later.

\begin{definition}\label{alternating}
In a Coxeter group with linear diagram $\Gamma_n$, a reduced word is alternating if for $i=0,1,\ldots, n-1$, the occurrences of $s_i$ alternate with those of $s_{i+1}$. A heap is called alternating if it is the heap of an alternating word. If the Coxeter group is $\tilde{A}_{n-1}$ (see Figure \ref{dynkina} for the Coxeter diagram), the diagram is not linear but  we define the alternating word in the same way by setting $s_0= s_n$.
\end{definition}

\begin{Remark}\label{generatoronce} The subset of alternating words defined as the set of words in which each generator occurs at most once is particular, in the sense that each element of this subset is a reduced expression for a CFC element (we can not use any braid relation of length at least 3). This fact justifies why this subset will be treated separately from other alternating words in the rest of the article.

\end{Remark}

\section{CFC elements in types $\tilde{A}$ and $A$}\label{finite}

In this section, we will give a characterization and the enumeration of CFC elements in both types $A_{n-1}$ and $\tilde A_{n-1}$. The corresponding Coxeter diagrams are given in Figure~\ref{dynkina}.
\begin{figure}[!h]
\begin{center}\includegraphics{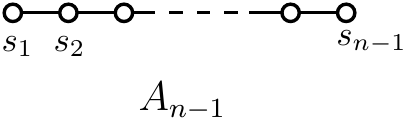}\hspace*{1cm}\includegraphics{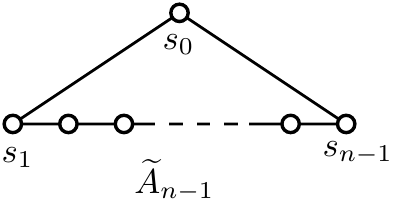}
\end{center}
\caption{Coxeter diagrams of types $A_{n-1}$ and $\tilde{A}_{n-1}$.}
\label{dynkina}
\end{figure}
\\The elements of $A_{n-1}^{CFC}$ were enumerated in \cite{BBEEGM} by using recurrence relations. Our characterization in terms of heaps allows us to take into account the Coxeter length. Actually, we can compute the generating functions $W^{CFC}(q) = \sum_{w \in W^{CFC}}q^{\ell(w)}$ for $W=A_{n-1}$ and $\tilde{A}_{n-1}$. In particular, when $q=1$ and $W=A_{n-1}$, we get back the enumeration from \cite{BBEEGM} (recall that the number of CFC elements in type $\tilde{A}_{n-1}$ is infinite). Our strategy is the following: first, we obtain a characterization of CFC elements in type $\tilde{A}_{n-1}$, we deduce from it a characterization of CFC elements in type $A_{n-1}$, then we derive  the enumeration of CFC elements in type $A_{n-1}$ and deduce from it the enumeration in type $\tilde{A}_{n-1}$.

\subsection{Characterization in type $\tilde{A}_ {n-1}$}

Note that, in this type,  the diagram of a cylindric transformation of a heap can be seen as drawn on a torus.
\begin{Theorem}\label{thmantilde} An element $ w\in \tilde A_{n-1}$ is CFC  if and only if one (equivalently, any) of its reduced expressions $\bf w$ verifies one of these conditions:
\begin{itemize}
\item[(a)] each generator occurs at most once in $\bf w$,
\item[(b)] $\bf w$ is an alternating word and  $|{\bf w}_{s_0}|=|{\bf w}_{s_1}|= \cdots=|{\bf w}_{s_{n-1}}| \geq 2$.
\end{itemize}
\end{Theorem}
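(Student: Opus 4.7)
The plan is to apply Theorem~\ref{propcfc} to the heap $H = H_{\bf w}$ of a reduced expression $\bf w$ of $w \in \tilde{A}_{n-1}$. Since every $m_{st}$ in this type is $2$ or $3$, Condition~1 of Theorem~\ref{propcfc} forbids only cylindric convex chains of length~$3$ alternating in two adjacent labels, while Condition~2 forbids same-label chain-coverings $\prec_c$ in $H^c$. The goal is to show that the conjunction of these two conditions on $H^c$ is equivalent to $\bf w$ satisfying (a) or (b).

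For sufficiency, case (a) is immediate from Remark~\ref{generatoronce}. For case (b), I would verify both conditions of Theorem~\ref{propcfc}. For Condition~2, the alternation between $s_i$ and each of $s_{i\pm 1}$ inserts an intermediate element of label $s_{i\pm 1}$ between every pair of consecutive occurrences of $s_i$ in $H$, ruling out same-label chain-coverings in the original heap. A cyclic same-label chain-covering would be added only if some $s_i$ attains both the minimum and the maximum of $H$; a counting argument using equal counts $k \ge 2$ and alternation shows that each of the $k-1$ gaps between consecutive $s_i$'s contains exactly one $s_{i-1}$ and one $s_{i+1}$, forcing the remaining occurrence of each neighbor outside all $s_i$'s and thus precluding $s_i$ from being both minimal and maximal in $H$. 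For Condition~1, any candidate chain with alternating labels $s_i, s_{i+1}, s_i$ in $H^c$ admits a detour either through the third neighbor $s_{i-1}$ (inserted between the two $s_i$'s in $H$ by alternation) or via the additional cyclic chain-coverings wrapping around the Coxeter diagram-cycle.

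For necessity, assume $w$ is CFC and (a) fails, so some $s_i$ appears at least twice. To prove alternation, I would argue by contradiction: if a pair $(s_i, s_{i+1})$ fails to alternate, then in some cyclic shift two consecutive $s_i$'s sit with no $s_{i+1}$ between them, producing either a direct same-label chain-covering in $H$ or a cylindric convex chain of length $3$ in $H^c$, both contradicting Theorem~\ref{propcfc}. For equal counts, alternation already gives $\bigl||{\bf w}_{s_i}| - |{\bf w}_{s_{i+1}}|\bigr| \le 1$ around the diagram-cycle; if the counts are not all equal, a global analysis around the cycle produces a generator $s$ whose first and last occurrences are simultaneously minimal and maximal in $H$, introducing a cyclic same-label chain-covering in $H^c$ by Definition~\ref{def} and violating Condition~2. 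Since (a) fails, the common count must be at least $2$.

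The main obstacle is the sufficiency in case (b), specifically verifying the absence of cylindric convex chains of length $3$ in $H^c$: the cylindric transformation introduces many new chain-coverings and the required detour argument must combine the in-heap alternation with the third neighbor and the cyclic structure provided by equal counts. In the necessity direction, the delicate point is the equal-count claim, which requires a careful global analysis around the diagram-cycle to extract a specific same-label violation from unequal counts.
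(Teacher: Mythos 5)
Your overall strategy coincides with the paper's: apply Theorem~\ref{propcfc}, and your sufficiency argument for (a) and (b) (the detour through the other neighbour $s_{i-1}$, plus the counting argument showing no generator can be simultaneously minimal and maximal in $H$) is essentially the argument given there. The paper also saves you some work in the necessity direction by quoting from \cite{BJN2} that FC elements of $\tilde A_{n-1}$ are exactly the alternating ones, so alternation is automatic once $w$ is CFC; your direct re-derivation of alternation is redundant but harmless.

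The genuine gap is in your equal-counts step. You claim that if the counts are not all equal, then some generator $s$ has its first occurrence minimal and its last occurrence maximal in $H$, so that the first bullet of Definition~\ref{def} adds a same-label relation and Condition~2 of Theorem~\ref{propcfc} is violated. This is false in general. Take $\tilde A_3$ and ${\bf w}=s_0s_1s_3s_2s_0s_1$: every adjacent pair alternates (so $w$ is FC and $\bf w$ is reduced), the counts are $(2,2,1,1)$, yet no generator is both ``first-minimal'' and ``last-maximal'' in $H$ (the first $s_0$ is minimal but the last $s_0$ is covered by the final $s_1$; the first $s_1$ is preceded by $s_0$; $s_2$ and $s_3$ occur once and are not minimal), and there is no same-label chain covering in $H$ either. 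So Condition~2 holds for this non-CFC element, and the obstruction is necessarily a Condition~1 violation: the second bullet of Definition~\ref{def} adds the cross-label relation from the maximum of $H_{\{s_0,s_1\}}$ (the last $s_1$) to its minimum (the first $s_0$), producing the cylindric convex chain with labels $s_0,s_1,s_0$ through the last $s_0$. This is exactly why the paper's proof, after fixing a generator $s_j$ of maximal count with $|{\bf w}_{s_j}|=|{\bf w}_{s_{j+1}}|+1$, splits into \emph{two} subcases according to whether $|{\bf w}_{s_{j-1}}|$ equals $|{\bf w}_{s_j}|$ or $|{\bf w}_{s_j}|-1$: only the second subcase yields a same-label covering, while the first yields a cylindric convex chain of length $3$ built from an added cross-label relation. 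Your argument must be repaired to include this first subcase (or, equivalently, restricted to the situation where the set of maximal-count generators contains an isolated vertex of the cycle, which need not happen).
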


\begin{proof} As said in Remark \ref{generatoronce}, if each generator occurs at most once in $\bf w$, then $w$ is a CFC element. So let $\bf w$ be a reduced expression of a CFC element $w$ having a generator  occuring at least twice in $\bf w$. Recall that according to \cite{BJN2}, $w \in \tilde A_{n-1}$ is fully commutative if and only if $\bf w$ is an alternating word. Let $s_j$ be a generator which occurs at least twice in $\bf w$ and such that for all $ k \in \{0,1, \ldots , n-1 \}, |{\bf w}_{s_j}| \geq |{\bf w}_{s_k}|$. We will prove that $|{\bf w}_{s_j}|= |{\bf w}_{s_{j+1}}|$ where we set $s_n=s_0$, which is sufficient to show that each generator occurs the same number of times. As $\bf w$ is alternating,  there are three possibilities:
\begin{itemize}
\item $|{\bf w}_{s_j}|= |{\bf w}_{s_{j+1}}| -1$. It contradicts the maximality of $|{\bf w}_{s_j}|$.
\item $|{\bf w}_{s_j}|= |{\bf w}_{s_{j+1}}| +1$. By maximality of $|{\bf w}_{s_j}|$, there are two possibilities for  $H_{{\bf w}}^c$: $|{\bf w}_{s_j}|= |{\bf w}_{s_{j-1}}|$ or $|{\bf w}_{s_j}|= |{\bf w}_{s_{j-1}}| +1$. We obtain either a cylindric convex chain $x \prec_c y \prec_c z$ where $x$ and $z$ have label $s_j$ and $y$ has label $s_{j-1}$, or a chain covering relation between two indices  $p$ and $q$ with label $s_j$, which  allows us to conclude that $w$ is not a CFC element by Theorem \ref{propcfc} (see the figure below, where we have circled the points $x$, $y$, $z$, and $p$, $q$ respectively).
\newpage
\begin{figure}[!h]
\includegraphics{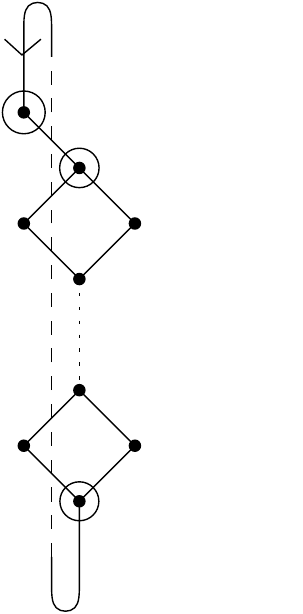}\hspace*{2cm}\includegraphics{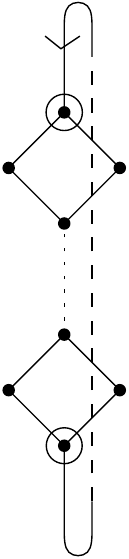}
\caption{The two possible heaps.}
\end{figure}

\item $|{\bf w}_{s_j}|= |{\bf w}_{s_{j+1}}|$ which is the needed condition.
\end{itemize}
Conversely, let   $\bf w$ be an alternating word such that  $|{\bf w}_{s_0}|=|{\bf w}_{s_1}|= \cdots=|{\bf w}_{s_{n-1}}| \geq 2$. $H_{\bf w}^c$ can not contain a cylindric convex chain $x  \prec_c y \prec_c z$ of length 3   such that $s_{a_x}=s_{a_z}= s_m $ and $s_{a_y}= s_{m+1}$ (\emph{resp.} $s_{m-1}$): indeed, the required condition on $\bf w$ implies that there exists an index $\ell $ such that $x  \prec_c \ell \prec_c z$ with $s_{a_\ell}= s_{m-1}$ (\emph{resp.} $s_{m+1}$), which is a contradiction with the cylindric convexity of the chain. The same argument holds for chain covering relations involving indices with the same labellings. \end{proof}

\subsection{Characterization and enumeration in type $A_{n-1}$}
In this case, we will both characterize the CFC elements and compute the generating function $$A^{CFC}(x):=\sum_{n \geq 1}A_{n-1}^{CFC}(q)x^{n}.$$

We begin with a lemma which is a consequence of Corollary 5.6 in \cite{BBEEGM}. Nevertheless, we will give a shorter proof using Theorem \ref{thmantilde}.
\begin{lemma}\label{lemmean}The CFC elements in type $A_{n-1 }$ are those having reduced expressions in which each generator occurs at most once.
\end{lemma}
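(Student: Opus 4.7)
The plan is to derive this from Theorem~\ref{thmantilde} by viewing $A_{n-1}$ as the standard parabolic subgroup $\langle s_1,\ldots,s_{n-1}\rangle$ of $\tilde{A}_{n-1}$, obtained by deleting the cyclic generator $s_0$. The first step I would carry out is the brief observation that for $w \in A_{n-1}$, the property of being CFC in $A_{n-1}$ coincides with that of being CFC in $\tilde{A}_{n-1}$: reduced expressions of an element of a standard parabolic subgroup use only generators of that subgroup, and cyclic shifts preserve the set of generators appearing in a word, so the reducedness and FC status of each cyclic shift is the same in both ambient groups.

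Next, I would pick $w \in A_{n-1}^{CFC}$ with a reduced expression ${\bf w}$ and apply Theorem~\ref{thmantilde} to $w$ regarded as an element of $\tilde{A}_{n-1}^{CFC}$. The theorem leaves two options: either (a) each generator of $\tilde{A}_{n-1}$ appears at most once in ${\bf w}$, or (b) ${\bf w}$ is alternating with $|{\bf w}_{s_0}|=|{\bf w}_{s_1}|=\cdots=|{\bf w}_{s_{n-1}}|\geq 2$. Since $s_0$ never appears in ${\bf w}$, we have $|{\bf w}_{s_0}|=0$, which rules out case (b); only (a) survives, giving the forward direction of the lemma. The converse is immediate from Remark~\ref{generatoronce}, which already tells us that any word in which each generator occurs at most once represents a CFC element.

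I do not anticipate any significant obstacle here; the step that most deserves care is the equivalence of CFC-ness across the parabolic embedding, but this is a standard consequence of the parabolic subgroup structure and can be dispatched in one sentence. The substantive combinatorial work has already been absorbed into Theorem~\ref{thmantilde}, so the lemma really amounts to noting which of its two cases can occur in the absence of $s_0$.
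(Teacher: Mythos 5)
Your proposal is correct and follows essentially the same route as the paper: embed $A_{n-1}$ as the parabolic subgroup of $\tilde{A}_{n-1}$ missing $s_0$, note that case (b) of Theorem~\ref{thmantilde} forces $|{\bf w}_{s_0}|\geq 2$ and hence cannot occur, and invoke Remark~\ref{generatoronce} for the converse. Your explicit justification that CFC-ness is preserved under the parabolic embedding is a point the paper passes over silently, but the argument is the same.
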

\begin{proof} Let $\bf w$ be a reduced word of a CFC element in type $A_{n-1}$. By definition of the Coxeter diagram (cf Figure~\ref{dynkina}), it is a reduced word of a CFC element in type $\tilde{A}_{n-1}$ in which the generator $s_0$ does not occur. According to Theorem \ref{thmantilde}, the only such CFC elements in  type $\tilde{A}_{n-1}$ are those in which each generator occurs at most once. Conversely, if all generators occur at most once in $\bf w$,  Remark~\ref{generatoronce} ensures that $\bf w$ is a reduced expression of a CFC element.
\end{proof}

\begin{Theorem}\label{thman}
 We have $A_0^{CFC}(q)=1$, $A_1^{CFC}(q)=1+q$ and for $n \geq 3$,
 \begin{equation}\label{equationa2}A_{n-1}^{CFC}(q) = (2q+1)A_{n-2}^{CFC}(q)-qA_{n-3}^{CFC}(q).\end{equation}

Equivalently, we have the generating function:
 $$A^{CFC}(x)= x\frac{1-qx}{1-(2 q+1)x+qx^2}.$$
 \end{Theorem}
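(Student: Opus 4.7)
My plan is to use Lemma~\ref{lemmean} to recast the enumeration as a sum over subsets of generators, then derive the recurrence by a short case analysis, and finally read off the generating function.

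For the combinatorial model, I would start from Lemma~\ref{lemmean}: every CFC element $w\in A_{n-1}$ admits a reduced expression in which each generator occurs at most once, so $w$ is determined by its heap $H_w$. Such a heap is itself specified by the subset $I(w)\subseteq\{1,\ldots,n-1\}$ of indices $i$ for which $s_i$ occurs in $w$, together with, for each pair $\{i,i+1\}\subseteq I(w)$, a choice of which of $s_i,s_{i+1}$ comes first (non-adjacent generators commute and so are incomparable in the heap). Writing $c(I)$ for the number of pairs of consecutive integers in $I$, this gives the closed form
$$A_{n-1}^{CFC}(q)=\sum_{I\subseteq\{1,\ldots,n-1\}} q^{|I|}\,2^{c(I)}.$$
The base cases $A_0^{CFC}(q)=1$ and $A_1^{CFC}(q)=1+q$ are immediate.

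To derive the recurrence, I would write $f_k:=A_k^{CFC}(q)$ and split $f_k=g_k+h_k$ into the partial sums over those subsets of $\{1,\ldots,k\}$ that respectively do and do not contain the top index $k$; by definition $h_k=f_{k-1}$. A further split of $g_{n-1}$ according to whether $n-2\in I$ (which is exactly what controls the presence of the consecutive pair $\{n-2,n-1\}$) yields $g_{n-1}=2q\,g_{n-2}+q\,h_{n-2}$. Substituting $g_{n-2}=f_{n-2}-f_{n-3}$ and $h_{n-2}=f_{n-3}$ into $f_{n-1}=g_{n-1}+f_{n-2}$ then collapses to the announced recurrence $f_{n-1}=(2q+1)f_{n-2}-q\,f_{n-3}$.

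For the generating function, the closed form follows by the standard manipulation: multiplying the recurrence by $x^n$, summing over $n\geq 3$, and treating the initial values separately yields
$$\bigl(1-(2q+1)x+qx^{2}\bigr)\,A^{CFC}(x)=x(1-qx),$$
which is equivalent to the stated formula. The only substantive point is in the first step, namely checking that distinct orientation data for consecutive pairs produce genuinely distinct CFC elements; this is immediate because adjacent generators in $A_{n-1}$ satisfy $m_{s_is_{i+1}}=3$ and therefore do not commute, so two reduced expressions that differ in the order of a consecutive pair cannot be commutation-equivalent, and their heaps are non-isomorphic. Everything else is routine.
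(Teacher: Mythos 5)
Your proof is correct, but it follows a genuinely different route from the paper's. The paper restricts the bijection of \cite{BJN2} between FC elements of $A_{n-1}$ and labelled Motzkin-type paths to the CFC case (paths of height at most $1$), and then performs a first-return-to-the-axis decomposition; this yields the full-history recurrence
$A_{n-1}^{CFC}(q) = A_{n-2}^{CFC}(q)+ \sum_{m=2}^{n} 2^{m-2} q^{m-1} A_{n-1-m}^{CFC}(q)$,
which is telescoped into the three-term recurrence \eqref{equationa2}. You instead work directly from Lemma~\ref{lemmean}: a CFC element is a heap with each generator occurring at most once, hence is equivalent to a subset $I$ of the generators together with an acyclic orientation of the induced non-commutation graph (a union of paths, so every orientation is acyclic and realizable, and distinct orientations give non-isomorphic labelled heaps, hence distinct FC elements). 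This gives the closed form $\sum_I q^{|I|}2^{c(I)}$, and your last-index case analysis ($n-1\in I$ or not, then $n-2\in I$ or not) produces \eqref{equationa2} directly; the base cases and the generating-function manipulation check out. Your argument is more elementary and self-contained, and the closed form $\sum_I q^{|I|}2^{c(I)}$ is a nice by-product. What the paper's path model buys, and what your route does not supply, is the intermediate identity \eqref{equationa} itself, which is reused verbatim in the proof of the enumeration in type $\tilde{A}_{n-1}$ to eliminate the double sum there; so if one adopted your proof, that later argument would need either \eqref{equationa} rederived separately or a parallel subset-based treatment of the affine case.
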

 
\begin{proof} According to \cite[Proposition 2.7]{BJN2},  FC elements in $A_{n-1}$ are in bijection with Motzkin type paths of length $n$, with starting and ending points at height 0, where the horizontal steps are labeled either L or R (and horizontal steps at height 0 are always labeled R). We recall the bijection, which is defined as follows: let $w$ be a FC element in $A_{n-1}$, and $H$ its heap. To each $s_i \in S$, we associate a point $P_i=(i, |H_{s_i}|)$. As $w$ is alternating, three cases can occur:
 \begin{itemize}
 \item $|H_{s_i}| =|H_{s_{i+1}}|-1$, corresponding to an ascending step,
 \item $|H_{s_i}|=|H_{s_{i+1}}|+1$, corresponding to a descending step,
 \item $|H_{s_i}| =|H_{s_{i+1}}|$, corresponding to an horizontal step, labelled by R if $s_i$ occurs before $s_{i+1}$ in $w$, L otherwise.
\end{itemize}   
 According to Lemma~\ref{lemmean}, the restriction of this bijection to CFC elements is a bijection between CFC elements and the previous Motzkin paths, having length $n$, whose height does not exceed 1. By taking into account the first return to the $x$-axis, we obtain the following recurrence relation for $n \geq 3$:\

\begin{equation}\label{equationa}A_{n-1}^{CFC}(q) = A_{n-2}^{CFC}(q)+ \sum_{m=2}^{n} 2^{m-2} q^{m-1} A_{n-1-m}^{CFC}(q),
\end{equation}
 where we write $A_{-1}^{CFC}(q)=1$ (which fits with $A_0^{CFC}(q)=1$, $A_1^{CFC}(q)=1+q$ and the expected recurrence relation). Rewriting (\ref{equationa}) with $n$ replaced by $n-1$ and combining with (\ref{equationa})  allows us to eliminate the sum over $m$, and leads to the recurrence relation (\ref{equationa2}).

Classical techniques in generating function theory and the values $A_0^{CFC}(q)=1$, $A_1^{CFC}(q)=1+q$
enable us to compute the desired generating function. \end{proof}

Notice that, as expected, if $q\rightarrow1$, we find the odd-index Fibonacci numbers generating function of \cite{BBEEGM}. This $q$-analog was already known, see sequence A105306 in \cite{SLO}.

\subsection{Enumeration in type $\tilde{A}_{n-1}$}

\begin{proposition} We have for $n \geq 3$
\begin{equation}\label{equationatilde}\tilde{A}_{n-1}^{CFC}(q) = P_{n-1}(q) + \frac{2^n-2}{1-q^n}~q^{2n},\end{equation}
where $P_{n-1}(q)$ is a polynomial of degree n satisfying for n$ \geq 4$
$$P_{n}(q)=(3q+1)P_{n-1}(q)+(2q+2q^2)P_{n-2}(q)-q^2P_{n-3}(q),$$
 with $P_{1}(q)=1+2q+2q^2$, $P_{2}(q)=1+3q+6q^2+6q^3$, and $P_3(q)=1+4q+10q^2+16q^3+14q^4$. Moreover, we can compute the generating function:
 $$P(x) :=\sum_{n=1}^\infty P_n(q) x^n = \frac{x(1+2q+2q^2-(2q+2q^2)x+q^2x^2)}{(1-qx)(1-(2q+1)x+qx^2)}.$$
 
Therefore the coefficients of $\tilde{A}_{n-1}^{CFC}(q)$ are ultimately periodic of exact period n, and the periodicity starts at length n.
\end{proposition}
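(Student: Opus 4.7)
My plan is to decompose $\tilde{A}_{n-1}^{CFC}$ via Theorem~\ref{thmantilde} into two disjoint families: (a) CFC elements admitting a reduced expression in which each generator occurs at most once, and (b) alternating elements with $|{\bf w}_{s_0}| = |{\bf w}_{s_1}| = \cdots = |{\bf w}_{s_{n-1}}| = k$ for some $k \geq 2$. The first family will contribute the polynomial $P_{n-1}(q)$ and the second the rational tail $(2^n-2)q^{2n}/(1-q^n)$.

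For family (b), I parameterize each element by the interleaving pattern of adjacent columns in its cylindric heap. Since all heights equal $k$, the alternation of the columns $H_{s_i}$ and $H_{s_{i+1}}$ is determined by a single bit recording whether $s_i^{(1)} \prec s_{i+1}^{(1)}$ or $s_{i+1}^{(1)} \prec s_i^{(1)}$, yielding $2^n$ labelings. The two uniform labelings force the cyclic chain $s_0^{(1)} \prec s_1^{(1)} \prec \cdots \prec s_{n-1}^{(1)} \prec s_0^{(1)}$ (or its reverse) inside the heap poset, contradicting irreflexivity of $\prec$; every other labeling produces a well-defined distinct heap, which is CFC by Theorem~\ref{thmantilde}(b). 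This gives $2^n - 2$ elements of length $kn$ for each $k \geq 2$, and summing over $k$ yields the claimed rational contribution.

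For family (a), I describe an element by a vector $h = (h_0, \ldots, h_{n-1}) \in \{0,1\}^n$ (indicating which generators appear) together with an orientation for each cyclic ``$11$''-transition $(h_i, h_{i+1}) = (1,1)$. When $h \neq (1,\ldots,1)$, the $e(h)$ such transitions can be oriented freely, giving $2^{e(h)}$ configurations; when $h = (1,\ldots,1)$, the same cyclic poset obstruction as in step (b) removes $2$ of the $2^n$ orientations. Setting $Q_n(q) := \sum_{h \in \{0,1\}^n} 2^{e(h)} q^{|h|}$ we obtain $P_{n-1}(q) = Q_n(q) - 2q^n$. I then compute $Q_n(q) = \mathrm{tr}(T^n)$ where
\[ T = \begin{pmatrix} 1 & q \\ 1 & 2q \end{pmatrix}, \]
whose characteristic polynomial $\lambda^2 - (2q+1)\lambda + q$ gives by Cayley--Hamilton the two-term recurrence $Q_n = (2q+1)Q_{n-1} - q Q_{n-2}$. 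Substituting $Q_n = P_{n-1} + 2q^n$ and eliminating the inhomogeneous term (by combining the resulting identity for $P_n$ with $q$ times the identity for $P_{n-1}$) produces the stated three-term recurrence for $P_n$, from which the closed form of $P(x)$ is immediate from the initial values.

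The periodicity claim then follows from the closed form: $P_{n-1}(q)$ has exact degree $n$ since the unique length-$n$ contribution comes from $h = (1,\ldots,1)$ with coefficient $2^n-2$, so for $\ell > n$ the length-$\ell$ coefficient of $\tilde{A}_{n-1}^{CFC}(q)$ is determined solely by the rational tail: it equals $2^n-2$ at each multiple of $n$ (starting at $2n$) and $0$ elsewhere. Combined with $[q^n] P_{n-1}(q) = 2^n-2$, this yields exact period $n$ starting at $\ell = n$. The step I expect to require the most care is the commutativity-class count in family (b): showing that the $2^n-2$ non-uniform labelings really correspond to pairwise non-isomorphic cylindric heaps (hence distinct elements) demands a careful analysis of the chain covering relations $\prec_c$ and their cylindric completions from Definition~\ref{def} on an $n$-column cylindric heap with equal column heights.
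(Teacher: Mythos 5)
Your proof is correct, and it takes a genuinely different route from the paper's. The paper goes through the bijection of \cite{BJN2} between FC elements of $\tilde A_{n-1}$ and labelled Motzkin-type paths: condition (b) of Theorem~\ref{thmantilde} becomes the all-horizontal paths at height $h\ge 2$ (giving the tail $(2^n-2)q^{2n}/(1-q^n)$ exactly as you do), while condition (a) becomes the paths of height $\le 1$, which are decomposed by their first and last returns to the $x$-axis; this yields a double-sum recurrence in terms of the polynomials $A_{m}^{CFC}(q)$ that is then telescoped, using \eqref{equationa} and \eqref{equationa2}, into the three-term recurrence. Your transfer-matrix count of family (a) --- supports $h\in\{0,1\}^n$ on the cycle weighted by $2^{e(h)}q^{|h|}$, computed as $\mathrm{tr}(T^n)$ with the correction $-2q^n$ for the two non-acyclic orientations of the full-support cycle --- replaces all of this by a self-contained argument that never invokes the type-$A$ generating function, and it makes the denominator of $P(x)$ transparent: $1-(2q+1)x+qx^2$ is the reciprocal characteristic polynomial of $T$, and the factor $1-qx$ is exactly what is needed to absorb the inhomogeneous term $2q^{n}$. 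The subtlety you flag (distinctness of the $2^n-2$ interleavings in family (b)) is genuine but routine: the bit attached to the pair $(s_i,s_{i+1})$ is recoverable from the heap as the relative order of the minimal elements of $H_{s_i}$ and $H_{s_{i+1}}$, and every non-cyclic orientation is realizable, so your count agrees with the paper's.

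One point of bookkeeping: your elimination gives
\[P_n=(3q+1)P_{n-1}-(2q+2q^2)P_{n-2}+q^2P_{n-3},\]
whose signs differ from the recurrence as printed in the proposition. Your signs are the correct ones: they are the ones compatible with the stated initial values (they give $P_4=1+5q+15q^2+30q^3+40q^4+30q^5$, and indeed $[q^2]P_4=15$ counts the $\binom{5}{2}$ two-element supports in $\tilde A_{4}$ with an extra factor $2$ for each of the $5$ cyclically adjacent pairs) and with the stated generating function, whose denominator expands to $1-(3q+1)x+(2q+2q^2)x^2-q^2x^3$; the printed recurrence is a sign typo. Finally, for the claim that the periodicity starts exactly at length $n$ (and not earlier) you should also record that $[q^{n-1}]P_{n-1}(q)\neq 0$, which is immediate in your model since any $(n-1)$-element support contributes.
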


\begin{proof}  In the same way as for the finite type $A_{n-1}$ (see \cite{BJN2}), FC elements  in $\tilde{A}_{n-1}$ are in bijection with Motzkin type paths of length n satisfying the following conditions:
\begin{itemize} 
 \item the starting point $P_0=(0, |H_{s_0}|)$ and the ending point $P_n=(n, |H_{s_n}|=|H_{s_0}|)$ have the same height,
 \item horizontal steps at height 0 are always labeled R,
 \item if the path contains only horizontal steps at height $\geq$ 1, the steps must  have not all the same labelling.
 \end{itemize} 
The construction of the path corresponding to a FC element is the same as in type $A$ if we set $s_n=s_0$.
In Theorem \ref{thmantilde}, the alternating CFC elements in which  all generators occur at least twice and in the same number  correspond to Motzkin type paths which have only horizontal steps. Therefore there are $2^n-2$ such paths for all fixed starting height $h \geq 2$.
It leads to the generating function
 $$\sum_{h=2}^{+\infty}(2^n-2)(q^{n})^h,$$
which can be summed to obtain the second term of the right hand side of (\ref{equationatilde}). In Theorem \ref{thmantilde}, the CFC elements which correspond to reduced expressions with at most one occurrence of each generator correspond to Motzkin type paths which stay at height $\leq 1$.  We denote by $P_{n-1}(q)$ the generating function of such elements. Let $i$ (\emph{resp.} $j$) be the first (\emph{resp.} last) return to the x-axis (see Figure~\ref{motzkinpathsatilde} for an example).

\begin{figure}[!h]
\begin{center}\includegraphics[scale=0.8]{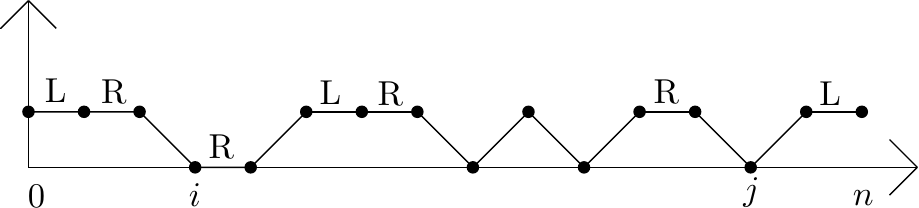}
\end{center}
\caption{Motzkin path corresponding to the element $s_{14}s_1s_0s_2s_6s_5s_7s_9s_{12}s_{11}$.}
\label{motzkinpathsatilde}
\end{figure}
We get
\begin{align}\label{equationatilde2}P_{n-1}(q) =& (2^n-2)q^n+nq^{n-1}2^{n-2} \nonumber \\&+\sum_{\genfrac{}{}{0pt}{}{i=0}{i \neq 0 \text{ or}}  }^{n-2}\sum_{\genfrac{}{}{0pt}{}{j=i+1}{j \neq n-1}}^{n-1} q^{n-1+i-j}2^{n-2+i-j} A_{j-i-1}^{CFC}(q) +A_{n-2}^{CFC}(q),
\end{align}
where the first term counts paths which stay at height 1, the second term counts paths such that $i=j$, and the last term counts paths such that $i=0$ and $j=n-1$. Rewriting (\ref{equationatilde2}) with $n-1$ replaced by $n-2$, combining with (\ref{equationatilde2}) and using the relation~\eqref{equationa} in Theorem \ref{thman} allow us to eliminate the double sum, and leads to the following 
relation:
$$P_{n-1}(q)=2qP_{n-2}(q)+2q^n +A_{n-1}^{CFC}(q)-qA_{n-3}^{CFC}(q).$$
From this and \eqref{equationa2}, it is easy to derive the expected relation and the generating function P. As $P_{n-1}(q)$ is a polynomial in q, the periodicity is clear, and its beginning comes from the fact that the leading coefficient of $P_{n-1}(q)$ is $(2^n -2)q^n$ and $[q^{n-1}]P_{n-1}(q) \neq 0$, where $[q^{n-1}]P_{n-1}(q)$ is the coefficient of $q^{n-1}$ in $P_{n-1}(q)$. \end{proof}

The situation in type $\tilde{A}_{n-1}$ is very different from all other types that we study, as we will see later: this is the only case where the generating series $ P_n$ of CFC elements which correspond to reduced expression with at most one occurrence of each generator  do not satisfy the recurrence relation $f_n(q) = (2q+1)f_{n-1}(q)-qf_{n-2}(q)$.

\section{Types $\tilde{C}, \tilde{B}, \tilde{D}, B$ and $D$.}\label{affine}
There are three other infinite families of affine Coxeter groups, they correspond to types $ \tilde B,\tilde C,\tilde D.$ All these groups contain an infinite number of CFC elements. In  any of these cases, using Theorem \ref{propcfc}, we are able to characterize the CFC elements and to compute the generating function $ W^{CFC}(q) = \sum_{w \in W^{CFC}}q^{\ell(w)}$. We also show that this generating function is always ultimately periodic. As a corollary, we deduce a characterization and the enumeration of CFC elements in finite types $B$ and $D$.

\subsection{Types $\tilde{C}_ {n}$ and $B_{n}$}

The Coxeter diagram of type $\tilde{C}_{n}$ is represented below.
\begin{figure}[!h]
\includegraphics{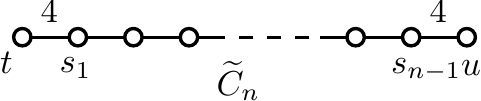}
\caption{Coxeter diagram of type $\tilde{C}_n$.}
\end{figure}

The situation is more complicated than in type $\tilde{A}_ {n-1}$, but we obtain an explicit characterization of CFC elements.

\begin{Theorem}\label{thmctilde}An element $w\in \tilde C_{n}$ is CFC if and only if one (equivalently any) of its reduced expressions $\bf w$ verifies one of the three conditions:
\begin{itemize}
\item[(a)] each generator occurs at most once in $\bf w$, 
\item[(b)] $\bf w$ is an alternating word and  $|{\bf w}_{t}|=|{\bf w}_{s_1}|= \cdots=|{\bf w}_{s_{n-1}}|=|{\bf w}_u| \geq 2$,
\item[(c)] $\bf w$ is a subword of the infinite periodic word $(ts_1s_2 \cdots s_{n-1} u s_{n-1} \cdots s_2 s_1)^ {\infty}$, where $|{\bf w}_{s_1}|= \cdots=|{\bf w}_{s_n-1}| \geq 2$ and $|{\bf w}_{t}|=|{\bf w}_{u}|=|{\bf w}_{s_1}|/2$ (i.e  we have $ {\bf w} =s_is_{i+1} \cdots s_{i-2}s_{i-1}$ or $s_is_{i-1} \ldots s_{i+2}s_{i+1}$, where $s_0=t$ and $s_n=u$).
\end{itemize}
\end{Theorem}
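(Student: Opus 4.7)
The plan is to mimic the proof of Theorem \ref{thmantilde}, combining the heap criterion of Theorem \ref{propcfc} with the classification of FC elements in $\tilde{C}_n$ from \cite{BJN2}. Case (a) is immediate from Remark \ref{generatoronce}, so I may restrict attention to reduced expressions ${\bf w}$ which are alternating in the sense of Definition \ref{alternating} and in which some generator occurs at least twice.

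For the forward direction, I first treat the interior generators $s_1, \ldots, s_{n-1}$, where every adjacent pair satisfies $m_{s_i s_{i+1}} = 3$. The same maximality argument as in Theorem \ref{thmantilde} applies verbatim: any imbalance $|{\bf w}_{s_i}| \neq |{\bf w}_{s_{i+1}}|$ produces in $H^c_{\bf w}$ either a cylindric convex chain of length three with same-labelled endpoints or a same-label chain covering relation, both forbidden by Theorem \ref{propcfc}. Hence there is a common interior value $N := |{\bf w}_{s_1}| = \cdots = |{\bf w}_{s_{n-1}}|$.

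Next I analyze the boundary pair $(t, s_1)$, where $m_{t s_1} = 4$, so a forbidden cylindric convex pattern now requires a \emph{length-four} alternating chain. A case analysis on $|{\bf w}_t|$ relative to $N$, using the structure of $H^c_{\{t, s_1\}}$ (two parallel chains linked by cylindric covering relations on the torus), leaves exactly two admissible values: $|{\bf w}_t| = N$, where the $t$'s and $s_1$'s interleave one-for-one, and $|{\bf w}_t| = N/2$, where each $t$ is sandwiched between a consecutive pair of $s_1$'s, yielding on $\{t, s_1\}$ the trace $(t\, s_1\, s_1)^{N/2}$ which is precisely the $(t, s_1)$-projection of $(t s_1 \cdots s_{n-1} u s_{n-1} \cdots s_1)^\infty$. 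An analogous analysis at the boundary pair $(u, s_{n-1})$ yields the same alternative. Combining with the common interior count, the diagonal case $|{\bf w}_t| = |{\bf w}_u| = N$ gives (b); the diagonal case $|{\bf w}_t| = |{\bf w}_u| = N/2$, together with the forced interleavings at both ends and the equal interior counts, reconstructs ${\bf w}$ up to commutation as a factor of $(ts_1 \cdots s_{n-1} u s_{n-1} \cdots s_1)^\infty$, which is (c). The mixed case must be excluded by a direct inspection of $H^c_{\bf w}$: if $t$ interleaves one-for-one with $s_1$ while $u$ only interleaves two-for-one with $s_{n-1}$, then some cyclic shift brings together a half-$u$-excursion and a full $t$-step in such a way that a same-label covering or length-four cylindric convex chain appears on the torus. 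Making this incompatibility precise is the main technical obstacle of the proof.

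For the converse, given ${\bf w}$ of type (b) or (c), I verify via Theorem \ref{propcfc} that $H^c_{\bf w}$ contains no forbidden pattern. In case (b), equality of all counts guarantees that between any two consecutive same-labelled points on the torus there lies an occurrence of a neighboring generator, blocking any potential cylindric convex chain of length three or four, just as in the converse of Theorem \ref{thmantilde}. In case (c), the explicit periodicity of ${\bf w}$ ensures that consecutive $t$'s (respectively $u$'s) are separated by a full $u$-excursion (resp.\ $t$-excursion), and consecutive interior same-generator occurrences are separated by an immediate neighbor, so once more no length-four cylindric convex chain nor same-label chain covering relation can appear in $H^c_{\bf w}$.
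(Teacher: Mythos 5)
Your overall strategy (combine Theorem~\ref{propcfc} with the classification of FC elements from \cite{BJN2}) is the same as the paper's, but there is a genuine gap at your very first reduction: you restrict attention to reduced words that are alternating in the sense of Definition~\ref{alternating}. The classification of FC elements in $\tilde C_n$ (\cite[Theorem~3.4]{BJN2}) does \emph{not} say that FC elements with a repeated generator are alternating; it gives five families, of which only the first is alternating, the others being the ``zigzag'' family (subwords of $(ts_1\cdots s_{n-1}us_{n-1}\cdots s_1)^\infty$) and three ``peak'' families. The elements of condition (c) live precisely in these non-alternating families: the trace of such a word on an interior pair $\{s_i,s_{i+1}\}$ is $(s_is_{i+1}s_{i+1}s_i)^\infty$ truncated, and its trace on $\{t,s_1\}$ is $(ts_1s_1)^{N/2}$ --- your own description --- neither of which is alternating. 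So your opening reduction contradicts the conclusion you are trying to reach, and the argument as written never examines the zigzag and peak families, which is where most of the work for (c) is done in the paper (a case analysis on the first and last letters of a zigzag word, producing length-3 or length-4 cylindric convex chains in $H^c$ unless (c) holds).

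A second, related gap is that you explicitly defer the ``mixed case'' (one end of the diagram interleaving one-for-one, the other two-for-one) as ``the main technical obstacle'' without resolving it. In the paper this case never has to be excluded by hand: the dichotomy between (b) and (c) is inherited from the FC classification itself --- alternating FC words lead to (b) by the maximal-interval argument, zigzag and peak FC words lead to (c) --- so a word mixing the two behaviours at the two ends is not FC in the first place and never enters the analysis. Likewise, your assertion that the boundary analysis ``leaves exactly two admissible values'' $|{\bf w}_t|\in\{N,N/2\}$ is stated rather than proved; ruling out, say, $|{\bf w}_t|=N-1$ requires exhibiting a length-four cylindric convex chain on $H^c_{\{t,s_1\}}$ (since $m_{ts_1}=4$), which is exactly the content of the figures in the paper's proof of the alternating case. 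The converse direction of your proposal is fine in outline, but the forward direction needs to be rebuilt on the full five-family classification.
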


\begin{proof}
Let $w$ be a CFC element in $\tilde C_{n}$ and $\bf w$  be one of its reduced expressions. We denote by $H$ the heap of {\bf w} and $H^c$ its cylindric transformation.
In \cite[Theorem~3.4]{BJN2}, FC elements are classified in five families, the first corresponding to alternating elements. As before, we distinguish two cases for elements in this first family:
\begin{itemize}
\item each generator occurs at most once in $\bf w$. According to Remark~\ref{generatoronce}, $w$ is CFC as no long braid relations can be applied. These elements satisfy (a).
\item $\bf w$ is an alternating word in which a generator occurs at least twice. In this case, the proof will essentially be the same as for alternating words in type $\tilde{A}_{n-1}$. We write $t=s_0$ and $u=s_n$. Let $[\![i_k,  i_\ell ]\!] $ be a maximal interval such that $|{\bf w}_{s_{i_k}}|=\dots =|{\bf w}_{s_{i_\ell}}| \geq 2$ and $\forall j \in \{0,1, \ldots , n \},$ $ |{\bf w}_{s_j}| \leq |{\bf w}_{s_{i_\ell}}|$. 
Assume $i_\ell \leq n-1$, by maximality of $|w_{s_{i_\ell}}|$ and the fact that $\bf w$ is alternating,  we have $|w_{s_{i_\ell}}|=|w_{s_{i_\ell+1}}|+1$ and there are two possibilities in  $H^c$: $|H^c_{s_{i_\ell}}|= |H^c_{s_{i_\ell-1}}|$ or $|H ^c_{s_{i_\ell}}|= |H^c_{s_{i_{\ell}-1}}| +1$. We obtain either a cylindric convex chain $v \prec_c x \prec_c y \prec_c z$ of length 4 where $v$, $y$ both have label $s_{i_\ell}$ and $x$, $z$ both have label $s_{i_{\ell-1}}$, or a chain covering relation between two indices  $p$ and $q$ with label $s_{i_\ell}$, which  allows us to conclude that $w$ is not  CFC by Theorem~\ref{propcfc} (see figure below, in  each case, we have circled the points $v$, $x$, $y$, $z$ and $q$, $r$ respectively).

\begin{figure}[!h]\includegraphics[scale=1]{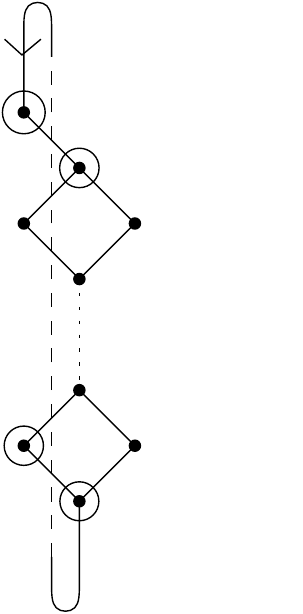} \hspace*{2cm}\includegraphics[scale=1]{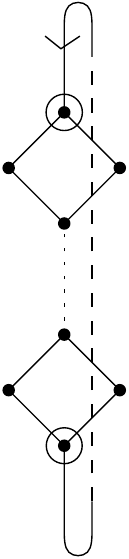}
\caption{The two possible heaps.}
\end{figure}
 So, $i_\ell$ has to be equal to $n$ if $w$ is CFC. The same argument for $i_k$ will lead to $i_k=0$, and so $|{\bf w}_{s_0}|=|{\bf w}_{s_1}|= \dots=|{\bf w}_{s_n-1}| \geq 2$. These elements satisfy (b).
 \end{itemize}
 Next, we will show that among the four remaining families from \cite{BJN2}, the CFC elements must satisfy (c). There are two cases:
 
\begin{itemize}

 \item $w$ belongs to the family called zigzag, i.e $\bf w$ is a subword of the infinite periodic word $(ts_1s_2 \ldots s_{n-1} u s_{n-1} \ldots s_2 s_1)^ {\infty}$ with at least one generator which occurs more than three times (actually, this condition will not be used in this proof). Denote by $s_i$ (\emph{resp.} $s_j$) the first (\emph{resp.} last) letter of $\bf w$. We assume that the second letter is $s_{i-1}$ (the case where the second letter is $s_{i+1}$ is symmetric and can be treated similarly).
 \\ If $s_j \notin \{s_{i-1}, s_{i+1}\}$, 
 $H^c$ contains a cylindric convex chain of length 3 involving points in $H^c_ {\{s_{i-1}, s_{i}\}}$ or a chain covering relation between two points with label $s_i$. 
\\If $s_j$ = $s_{i-1}$, then the last but one letter in $\bf w$ is either $s_i$ or $s_{i-2}$, so $H^c$ takes one of the two forms in Figure~\ref{patternctilde4}. 
 \begin{figure}[!h] \includegraphics[scale=1]{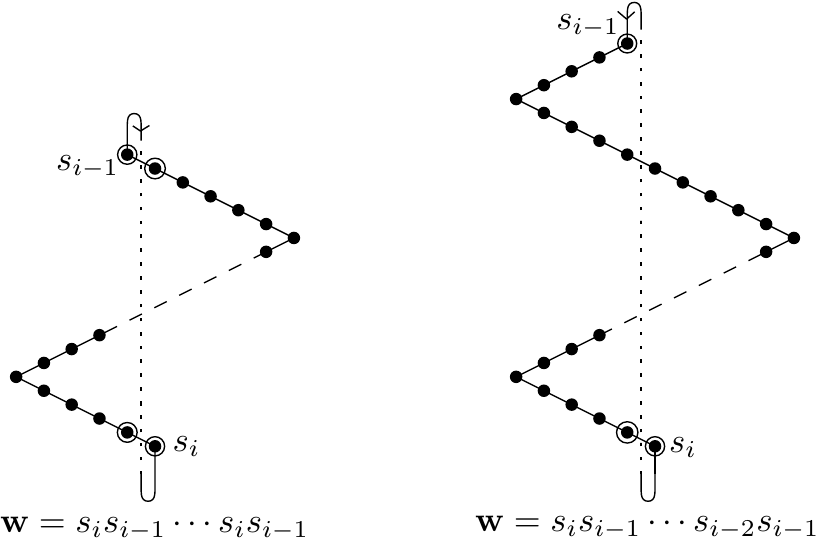}\caption{\label{patternctilde4}The two possible heaps for $s_j=s_{i-1}$.} \end{figure}
\\In the first case, $H^c$ contains a cylindric convex chain involving points in $H^c_ {\{s_{i-1}, s_{i}\}}$ of length 4, given by the two first and the two last letters of $\bf w$ (the corresponding points are circled in Figure~\ref{patternctilde4}, left). By Theorem~\ref{propcfc}, $\bf w$ is not CFC. In the second case, $H^c$ contains a cylindric convex chain involving points in $H^c_ {\{s_{i-1}, s_{i}\}}$ of length 3, given by the two first and the last letters of $\bf w$ (they are circled points in Figure~\ref{patternctilde4}, right). By Theorem~\ref{propcfc}, $\bf w$ is not CFC, unless $s_i=u$, and therefore $\bf w$ satisfies (c).
\\If $s_j$ = $s_{i+1}$,  then the last but one letter in $\bf w$ is either $s_i$ or $s_{i+2}$, so $H^c$ takes one of the two forms in Figure~\ref{patternctilde5}.
\begin{figure}[!h] \includegraphics{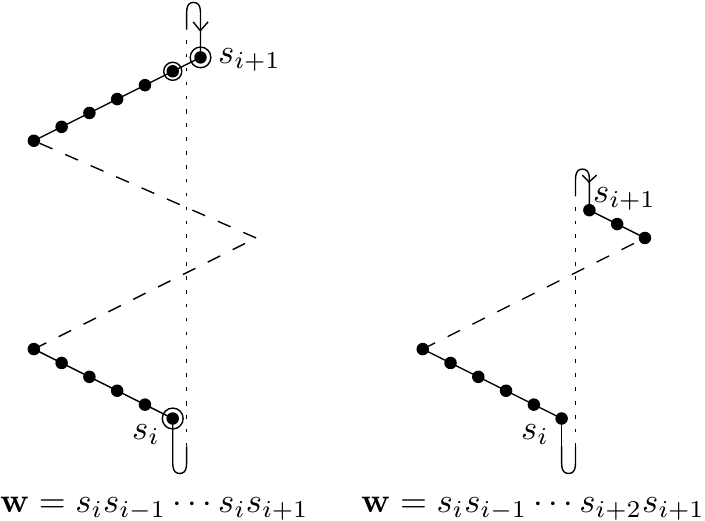}\caption{\label{patternctilde5}The two possible heaps for $s_j=s_{i+1}$.} \end{figure}
  \\In the first case, $H^c$ contains a  cylindric convex chain involving points in $H^c_ {\{s_{i}, s_{i+1}\}}$, of length 3, given by the first and the two last letters of $\bf w$ (the corresponding points are circled in Figure~\ref{patternctilde5}, left). By Theorem ~\ref{propcfc}, $\bf w$ is not CFC, unless $s_i=s_0=t$, which is not possible because the second letter of $\bf w$ is $s_{i-1}$, or unless $s_{i+1}=s_n=u$, and therefore $\bf w$ satisfies (c). In the second case, $\bf w$ satisfies (c).

\item $w$ belongs to one of the three remaining families, called right peak, left peak and left-right peak. In this case, it is easy to find either a chain covering relation $p \prec_c q$ such that $p$ and $q$ have the same label or a cylindric convex chain in $H^c$, except if $w$ satisfies (c) (and has length $2n$).
 
\end{itemize}

Conversely, any $w$ satisfying one of the three conditions (a), (b), or (c) is a CFC element. Indeed, $H^c$ contains neither a relation $i\prec_c j$ such that i and j have the same label nor a cylindric convex chain of length 3, except involving points in $H^c_{\{t, s_1\}}$  or $H^c_{\{s_{n-1}, u\}}$ which do not contain any cylindric convex chain of length 4: for $w$ satisfying (a) or (c), this is trivial, and for $w$ satisfying (b), the same proof as in type $\tilde{A}$ holds (see Theorem~\ref{thmantilde}). \end{proof}

\begin{corollary}
We have the generating function 
\begin{equation}\label{equationctilde}\tilde{C}_n^{CFC}(q) = A_{n+1}^{CFC}(q) + \frac{2^n}{1-q^{n+1}}~q^{2(n+1)}+ \frac{2n}{1-q^{2n}}~q^{2n}.\end{equation}
The coefficients of $\tilde{C}_n^{CFC}(q)$ are ultimately periodic of exact period n(n+1) if n is odd, and 2n(n+1) if n is even. Moreover, periodicity starts at length n.
\end{corollary}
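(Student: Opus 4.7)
The plan is to apply Theorem~\ref{thmctilde}, which partitions the CFC elements of $\tilde C_n$ into the three families (a), (b), (c), and to enumerate each family separately so that the three contributions match the three summands of~\eqref{equationctilde} in order.

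For family (a), the reduced words use each generator at most once, so no braid relation of length $\geq 3$ can ever be applied. The labels $4$ on the two end edges of the $\tilde C_n$ diagram are therefore irrelevant to the heap combinatorics, and the enumeration depends only on the underlying path graph on $n+1$ vertices, which is shared with $A_{n+1}$. Combined with Lemma~\ref{lemmean}, this should give a length-preserving bijection between family (a) and $A_{n+1}^{CFC}$, contributing the first summand $A_{n+1}^{CFC}(q)$.

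For family (b), I would adapt the Motzkin-path bijection of~\cite{BJN2} to type $\tilde C_n$: an alternating element with $|\mathbf{w}_t|=|\mathbf{w}_{s_1}|=\cdots=|\mathbf{w}_u|=h\geq 2$ corresponds to a horizontal path of $n$ steps at constant height $h$, each step labelled $L$ or $R$, with no further restriction since $h\geq 2$ avoids the ground level. This yields $2^n$ elements of length $(n+1)h$ for every $h\geq 2$, summing to $2^nq^{2(n+1)}/(1-q^{n+1})$. For family (c), I would identify each element of length $2nk$ as a length-$2nk$ subword of the period-$2n$ infinite word of condition~(c), determined by its starting position modulo $2n$; this yields $2n$ elements per $k\geq 1$, contributing $2nq^{2n}/(1-q^{2n})$. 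Summing the three pieces gives~\eqref{equationctilde}.

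For the periodicity assertions, $A_{n+1}^{CFC}(q)$ is a polynomial of degree at most $n+1$ and contributes nothing to coefficients of degree $\geq n+2$, while the other two summands are supported, past a few initial terms, on the arithmetic progressions $(n+1)\mathbb{Z}_{\geq 2}$ and $2n\mathbb{Z}_{\geq 1}$. Hence any ultimate period of the sum must be a common multiple of $n+1$ and $2n$; the computation $\gcd(n+1,2n)=\gcd(n+1,2)$ then gives $\mathrm{lcm}(n+1,2n)=n(n+1)$ for odd $n$ and $2n(n+1)$ for even $n$, and no strictly smaller common multiple can preserve both supports, yielding the exact period. The main obstacle I anticipate is the claim that periodicity already begins at length $n$: this requires a precise comparison of the top coefficients of $A_{n+1}^{CFC}(q)$, in particular at degrees $n$ and $n+1$, with the tails of the two periodic summands, to verify that they match the eventual periodic pattern from position $n$ onward. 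I expect this to reduce to a short direct calculation analogous to the verification $[q^n]P_{n-1}(q)=2^n-2$ used in the $\tilde A_{n-1}$ case.
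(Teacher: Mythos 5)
Your proposal follows the paper's proof essentially verbatim: the same partition into families (a), (b), (c) of Theorem~\ref{thmctilde}, the same counts ($2^n$ alternating elements per height $h\geq 2$ for (b), obtained in the paper by the before/after choices rather than via Motzkin paths, and $2n$ zigzag elements per length $2nk$ for (c)), and the same lcm computation for the period. The one step you flag as an anticipated obstacle is exactly the one-line check the paper performs --- the leading coefficient $2^n q^{n+1}$ of $A_{n+1}^{CFC}(q)$ agrees with the periodic pattern while $[q^n]A_{n+1}^{CFC}(q)\neq 0$ pins down where periodicity begins --- so your sketch is complete in substance.
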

\begin{proof} Notice that the sets of elements satisfying condition (a), (b) or (c) of Theorem~\ref{thmctilde} are disjoint. The first term  in (\ref{equationctilde}) corresponds to the set of CFC elements satisfying (a). This set is clearly in bijection with $A_{n+1}^{CFC}$.
\\Now, we show that the second term  in (\ref{equationctilde}) corresponds to CFC elements satisfying (b). If we fix an integer $h \geq 2$, there are $2^n$ such elements satisfying $h= |{\bf w}_{t}|=|{\bf w}_{s_1}|= \cdots=|{\bf w}_{s_{n-1}}|=|{\bf w}_u|$. Indeed, for each generator $t,~s_1,\ldots,~s_{n-1}$, there are two possibilities: $s_i$ appears before or after $s_{i+1}$ (and $t$ appears before or after $s_1$). The generating function of elements satisfying (b) is then equal to $\displaystyle \sum_{h \geq 2} 2^n q^{h(n+1)}$.
\\ With the condition on the number of occurrences of each generator, we can see that the  length of elements satisfying (c)  must be a multiple of $2n$, and there are $2n$ such elements for each possible length (the first two letters in $\bf w$ can be $s_i s_{i+1} $ or $s_i s_{i-1} $ for all $i \in [\![1,n-1 ]\!],$ or $ts_1$ or $us_{n-1})$. This gives the term $\frac{2n}{1-q^{2n}}~q^{2n}$.
\\$A_{n+1}^{CFC}(q)$ is a polynomial in q, and the two other terms in (\ref{equationctilde}) have  periodic coefficients, so $\tilde{C}_n^{CFC}(q)$ is ultimately periodic. The period is the least common multiple of $n+1$ and $2n$, which is $n(n+1)$ if $n$ is odd, and $2n(n+1)$ if $n$ is even. As the leading coefficient of $A_{n+1}^{CFC}(q)$ is $2^n q^{n+1}$ and $[q^n] A_{n+1}^{CFC}(q) \neq 0$, the beginning of the periodicity is $n$. \end{proof}

We can deduce the characterization in type $B$, whose corresponding Coxeter diagram is recalled below.
 \begin{figure}[h!]
\includegraphics[scale=1]{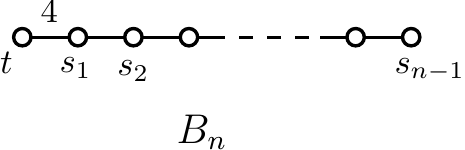}\caption{Coxeter diagram of type $B_n$.}
\end{figure}
\begin{corollary} The CFC elements in type $B_{n }$ are those having reduced expressions in which each generator occurs at most once. Moreover, $$B_n^{CFC}(q)=A_n^{CFC}(q).$$

\end{corollary}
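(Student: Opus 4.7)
The plan is to deduce this corollary from Theorem~\ref{thmctilde}, exploiting the fact that $B_n$ is the standard parabolic subgroup of $\tilde{C}_n$ obtained by deleting the generator $t$ from the Coxeter diagram. First, I would observe that an element $w \in B_n$ is CFC in $B_n$ if and only if it is CFC when viewed inside $\tilde{C}_n$: the set of reduced expressions of $w$ is the same in both groups, and cyclic shifts of a $t$-free word remain $t$-free, so the defining property of CFC transfers without change.

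Next, I would apply Theorem~\ref{thmctilde} and examine each of its three characterizing conditions under the constraint $|{\bf w}_t| = 0$. Condition (b) demands $|{\bf w}_t| \geq 2$, while condition (c) demands $|{\bf w}_t| = |{\bf w}_{s_1}|/2 \geq 1$; both are incompatible with $w \in B_n$. Only condition (a) survives, yielding the stated characterization, and conversely any word in which each generator appears at most once defines a CFC element by Remark~\ref{generatoronce}.

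For the generating function identity, I would construct a length-preserving bijection between the CFC elements of $B_n$ and those of $A_n$. The key observation is that the Coxeter diagrams of $B_n$ and $A_n$ share the same underlying graph (a path on $n$ vertices), differing only in that one edge in $B_n$ is labeled $4$ rather than $3$. Since heaps, and consequently the Matsumoto equivalence classes of reduced expressions, depend only on the pattern $m_{st} \geq 3$, the obvious identification of generators induces a length-preserving bijection between reduced words in which each generator occurs at most once. Combined with Lemma~\ref{lemmean} (which gives the analogous characterization of $A_n^{CFC}$), this yields $B_n^{CFC}(q) = A_n^{CFC}(q)$. I do not anticipate any real obstacle here: the corollary is essentially a bookkeeping consequence of Theorem~\ref{thmctilde}, the only subtle point being that ``each generator occurs at most once'' is a well-defined property of $w$ rather than of a specific reduced expression, which follows from full commutativity since all reduced expressions of an FC element are related by commutations and thus share the same multiset of letters.
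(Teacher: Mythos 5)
Your proposal is correct and follows essentially the same route as the paper: view $B_n$ as the parabolic subgroup of $\tilde C_n$ obtained by deleting one endpoint generator (the paper deletes $u$, you delete $t$ --- equivalent by the symmetry of the $\tilde C_n$ diagram), note that conditions (b) and (c) of Theorem~\ref{thmctilde} force that generator to occur, so only (a) survives, and invoke Remark~\ref{generatoronce} for the converse. Your explicit bijection for $B_n^{CFC}(q)=A_n^{CFC}(q)$ and the remark that the multiset of letters is well defined for FC elements are details the paper leaves implicit, but they do not change the argument.
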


\begin{proof}Let $w$ be a CFC element in type $B_n$, and $\bf w$ one of its reduced expression. Consequently, $w$ is a CFC element in type $\tilde{C_n}$ in which the generator $u$ does not occur. But, according to Theorem \ref{thmantilde}, the only such CFC elements are those satisfying (a) (and having no u): if $w$ satisfies (b) (\emph{resp.} (c)), all generators appear the same number of times (\emph{resp.} u must appear in $\bf w$). Conversely, if all generators occur at most once in $\bf w$, we already saw in Remark~\ref{generatoronce} that $w$ is a CFC element. \end{proof}

\subsection{Types $\tilde{B}_ {n+1}$ and $D_{n+1}$.}
We will obtain once again a characterization of CFC elements in type $\tilde{B}_{n+1}$, deduce a characterization and the enumeration in type $D_{n+1}$, and finally deduce the enumeration in type  $\tilde{B}_{n+1}$.

In what follows, by ``$\bf w$ is an alternating word", we mean that if we replace $t_1$ and $t_2$ by $s_0$, and $u$ by $s_n$, $\bf w$ is an alternating word in the sense of Definition~\ref{alternating}, and $t_1$ and $t_2$ alternate in $\bf w$. 
\newpage
\begin{figure}[!h]
\includegraphics{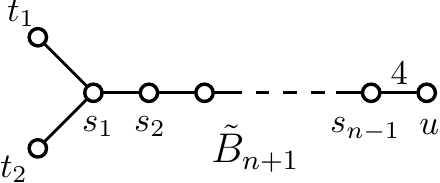} \hspace*{1cm}
\includegraphics{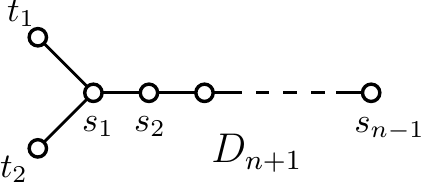}
\caption{Coxeter diagrams of types $\tilde{B}_{n+1}$ and $D_{n+1}$.}
\end{figure}

\begin{Theorem}\label{thmbtilde}
An element $w\in \tilde B_{n+1}$ is CFC if and only if one (equivalently any) of its reduced expressions $\bf w$ verifies one of these conditions:
\begin{itemize}
\item[(a)] each generator occurs at most once in $\bf w$, 
\item[(b)] $\bf w$ is an alternating word,  $|{\bf w}_{s_1}|= \cdots=|{\bf w}_{s_n-1}|=|{\bf w}_u| \geq 2$, and $|{\bf w}_{t_1}|=|{\bf w}_{t_2}|=|{\bf w}_{s_1}|/2$ (in particular, $|{\bf w}_{s_1}|$ is even), 
\item[(c)] $\bf w$ is a subword of $(t_1t_2s_1s_2 \ldots s_{n-1} u s_{n-1} \ldots s_2 s_1)^ {\infty}$, which is an infinite periodic word, where $t_1$ and $t_2$ are allowed to commute, such that $|{\bf w}_{s_1}|= \cdots=|{\bf w}_{s_{n-1}}| \geq 2$, and $|{\bf w}_{t_2}|=|{\bf w}_{t_1}|=|{\bf w}_{u}|=|{\bf w}_{s_1}|/2$ (i.e $\bf w$ takes one of the five forms: $s_is_{i+1} \ldots s_{i-2}s_{i-1}$ or $s_is_{i-1} \ldots s_{i+2}s_{i+1}$ or $t_1s_1 \ldots s_1 t_2$ or $t_2s_1 \ldots t_1$ or $t_1t_2s_1\ldots s_1$). 
\end{itemize}
\end{Theorem}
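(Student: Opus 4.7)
The proof follows the template of Theorem~\ref{thmctilde}, with the only genuinely new ingredient being the bookkeeping around the two commuting generators $t_1, t_2$ attached to $s_1$. Since any CFC element is in particular FC, I would begin with the classification of FC elements in $\tilde{B}_{n+1}$ from \cite{BJN2}, which partitions them into an alternating family, a zigzag family and several peak families. The plan is then to traverse each family and apply Theorem~\ref{propcfc}, keeping only those elements for which $H^c$ contains neither a chain covering relation between equally labelled points nor a cylindric convex chain of length $m_{st}$.

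For the alternating family, the case where each generator occurs at most once is immediate from Remark~\ref{generatoronce} and gives (a). Otherwise, I would reproduce the maximality argument of Theorem~\ref{thmctilde}: choose an interior $s_j$ with $1 \le j \le n-1$ maximizing $|{\bf w}_{s_j}|$ and walk outward. On the $u$-side the same two diagrams as in type $\tilde{C}_n$ force $|{\bf w}_{s_1}| = \cdots = |{\bf w}_{s_{n-1}}| = |{\bf w}_u|$. On the opposite side, $t_1$ and $t_2$ jointly play the role of the single generator $t$: the chains $H^c_{t_1}$ and $H^c_{t_2}$ are interleaved along $H^c_{s_1}$, so the analogous diagrams force $|{\bf w}_{t_1}| + |{\bf w}_{t_2}| = |{\bf w}_{s_1}|$, and the alternating convention on $t_1, t_2$ then gives $|{\bf w}_{t_1}| = |{\bf w}_{t_2}|$, yielding case (b).

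For the zigzag family the case analysis is essentially the one in the proof of Theorem~\ref{thmctilde}: depending on the first and last letters of $\bf w$, one obtains figures analogous to Figures~\ref{patternctilde4} and~\ref{patternctilde5} producing either a cylindric convex chain of length $3$ or $4$ in $H^c_{\{s_i,s_{i\pm1}\}}$, or $\bf w$ is a complete period of the cyclic word and thus falls in (c); the commuting pair $t_1, t_2$ simply enlarges the list of admissible starting patterns from two to the five explicit shapes appearing in the statement. The three peak families are eliminated exactly as in type $\tilde{C}_n$ by exhibiting either a chain covering relation between two equally labelled points or a cylindric convex chain in $H^c$.

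For the converse, I would verify directly in each of (a), (b), (c) that $H^c$ contains no forbidden pattern, the only potentially dangerous subdiagrams sitting in $H^c_{\{t_1,t_2,s_1\}}$ or $H^c_{\{s_{n-1},u\}}$; in case (b) the equal-occurrence conditions rule them out exactly as in Theorem~\ref{thmantilde}, and in (c) the periodic structure does. The main obstacle I expect is the careful handling of $t_1, t_2$: because they commute, the alternating convention must be read so as to control cylindric convex chains through $H^c_{\{t_1,s_1\}}$ and $H^c_{\{t_2,s_1\}}$ \emph{simultaneously}, and this is precisely what forces the exact balance $|{\bf w}_{t_1}| = |{\bf w}_{t_2}|$ rather than a weaker total-count condition.
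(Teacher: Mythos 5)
Your proposal follows the paper's proof essentially verbatim: it invokes the five-family classification of FC elements in $\tilde B_{n+1}$ from \cite{BJN2}, reruns the type-$\tilde C_n$ maximality argument for the alternating family and the zigzag/peak case analyses via Theorem~\ref{propcfc}, and isolates the $t_1,t_2$ bookkeeping as the only new ingredient, exactly as the paper does. One small caution: the balance $|{\bf w}_{t_1}|=|{\bf w}_{t_2}|$ does \emph{not} follow from the alternating convention alone (an odd total $|{\bf w}_{t_1}|+|{\bf w}_{t_2}|$ is compatible with alternation but gives counts differing by one); as you correctly indicate at the end, it is the forbidden cylindric pattern arising from a subword $t_1s_1t_2s_1\cdots s_1t_1$ that rules out the odd case, which is precisely the paper's argument.
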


\begin{proof}
The steps of this proof are the same as in Theorem~\ref{thmctilde}. Let $w$ be a CFC element in $\tilde B_{n+1}$ and let $\bf w$ be one of its reduced expressions. We denote by $H$ the heap of $\bf w$ and $H^c$ its cylindric transformation.
In \cite[Theorem 3.10]{BJN2}, FC elements are classified in five families. The first corresponds to alternating elements. As before, we distinguish two cases:

\begin{itemize} \item each generator occurs at most once in $\bf w$. These elements satisfy (a).
\item $\bf w$ comes from an alternating word of type $\tilde C_n$, where we applied the replacements $(t,t, \ldots, t) \rightarrow (t_1,t_2,t_1, \ldots)$ or $(t_2,t_1, \ldots)$. The same proof as for type $\tilde C$ gives $|{\bf w}_t|=|{\bf w}_{s_1}|= \cdots=|{\bf w}_{s_{n-1}}|=|{\bf w}_u|$. If $|{\bf w}_t|$ is odd, after replacement we have the subword of ${\bf w}$ where we keep only the generators $t_1,~ t_2$ and $s$ is $ t_1s_1t_2s_1 \ldots s_1 t_1$ (or $t_2s_1t_1s_1 \ldots s_1 t_2$), which prevents $w$ from being CFC. So $|{\bf w}_t|$ is even, and as $\bf w$ is alternating, $|{\bf w}_{t_2}|=|{\bf w}_{t_1}|=|{\bf w}_{s_1}|/2$. These elements satisfy (b).
\end{itemize}

 Next, we will show that among the four remaining families from \cite{BJN2}, the CFC elements must satisfy (c). There are two possibilities:
\begin{itemize}

\item $\bf w$ is a subword of $(t_1t_2s_1s_2 \ldots s_{n-1} u s_{n-1} \ldots s_2 s_1)^ {\infty}$, which is an infinite periodic word, where $t_1$ and $t_2$ are allowed to commute, such that a generator occurs more than twice. The same case distinction as in type $\tilde C$ leads us to the required condition (c).
\item $H_{\bf w}$ is a heap among special cases analoguous to the ones in type $\tilde C_n$ which are trivially non CFC, except for those which satisfy (c) (and are of length $2(n+2)$).\end{itemize}
Conversely, all elements $w$ satisfying one of the three conditions (a), (b) or (c) are CFC elements. Indeed, $H^c$ contains neither relations $i\prec j$  such that $i$ and $j$ have the same label nor cylindric convex chains of length 3, except involving points in $H^c_{\{s_{n-1},u\}}$, and these points can not form a cylindric convex chain of length 4. \end{proof}

\begin{proposition} The CFC elements in type $D_{n+1 }$ are those having reduced expressions in which each generator occurs at most once. Moreover, we have $D_{1}^{CFC}(q)=1+q$, $D_{2}^{CFC}(q)=1+2q+q^2$, $D_{3}^{CFC}(q)=1+3q+5q^2+4q^3$ and for $n \geq 3$:
 \begin{equation}\label{equationd}D_{n+1}^{CFC}(q) = (2q+1)D_{n}^{CFC}(q)-qD_{n-1}^{CFC}(q).\end{equation}
In other words, we have
 $$D^{CFC}(x):=\sum_{n=0}^{\infty} D_{n+1}^{CFC}(q)x^n= \frac{(1+q)-xq(1+q)+x^2q^2(1+2q)}{1-(2q+1)x+qx^2}.$$
\end{proposition}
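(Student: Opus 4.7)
\emph{Proof proposal.} The plan is to split into two parts: derive the characterization from Theorem~\ref{thmbtilde}, then prove the recurrence combinatorially and solve for the generating function.

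For the characterization I would mirror the argument used for $B_n$: $D_{n+1}$ is the standard parabolic subgroup of $\tilde B_{n+1}$ obtained by dropping the generator $u$, so any reduced word $\mathbf{w}$ of $w \in D_{n+1}^{CFC}$ is also a reduced word for a CFC element of $\tilde B_{n+1}$ that does not use $u$. Of the three cases (a)--(c) in Theorem~\ref{thmbtilde}, both (b) and (c) impose $|\mathbf{w}_u| = |\mathbf{w}_{s_1}|/2 \geq 1$ and thus force $u$ to appear, so only (a) survives. The converse is immediate from Remark~\ref{generatoronce}.

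For the enumeration, the characterization shows that every CFC element $w$ of $D_{n+1}$ is encoded by its heap $H_w$, whose labels are all distinct. Such a heap is determined by a support $T \subseteq S := \{t_1,t_2,s_1,\ldots,s_{n-1}\}$ together with, for each non-commuting pair of generators in $T$ (i.e.\ each edge of the induced Coxeter subgraph $\Gamma_{D_{n+1}}[T]$), a choice of which of the two corresponding points lies below the other in $H_w$. The resulting relation extends to a consistent partial order iff the induced orientation of $\Gamma_{D_{n+1}}[T]$ is acyclic, and whenever it is, the corresponding word is automatically reduced (no braid move of length $\geq 3$ can apply since each label appears at most once). Hence
\[
D_{n+1}^{CFC}(q) \;=\; \sum_{T \subseteq S} a\bigl(\Gamma_{D_{n+1}}[T]\bigr)\, q^{|T|},
\]
where $a(\Gamma)$ denotes the number of acyclic orientations of a graph $\Gamma$.

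The recurrence~\eqref{equationd} then follows by conditioning on whether $s_{n-1}$ and $s_{n-2}$ belong to $T$. If $s_{n-1} \notin T$, the contribution is exactly $D_n^{CFC}(q)$. If $s_{n-1} \in T$ but $s_{n-2} \notin T$, the vertex $s_{n-1}$ is isolated in $\Gamma_{D_{n+1}}[T]$ and the contribution collapses to $q\,D_{n-1}^{CFC}(q)$. If both lie in $T$, then $s_{n-1}$ is a leaf of $\Gamma_{D_{n+1}}[T]$ attached to $s_{n-2}$, which doubles the acyclic-orientation count compared with $\Gamma_{D_{n+1}}[T \setminus \{s_{n-1}\}]$; summing over supports of $D_n$ containing $s_{n-2}$, which account for $D_n^{CFC}(q) - D_{n-1}^{CFC}(q)$ (by applying the same decomposition inside $D_n$), gives $2q\bigl(D_n^{CFC}(q) - D_{n-1}^{CFC}(q)\bigr)$. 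Adding the three contributions yields the stated recurrence for $n \geq 3$. The initial values $D_1^{CFC}(q)$, $D_2^{CFC}(q)$, $D_3^{CFC}(q)$ can be verified directly (using $D_2 \cong A_1 \times A_1$ and $D_3 \cong A_3$), and the closed form for $D^{CFC}(x)$ then follows from standard generating-function manipulations as in Theorem~\ref{thman}. The main technical step I expect to have to justify carefully is the heap/acyclic-orientation correspondence; once this is granted, the rest of the argument is essentially bookkeeping.
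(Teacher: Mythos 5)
Your proof is correct, but the enumeration step takes a genuinely different route from the paper's. The characterization part (dropping $u$ from $\tilde B_{n+1}$ and observing that conditions (b) and (c) of Theorem~\ref{thmbtilde} force $u$ to occur, with the converse from Remark~\ref{generatoronce}) is exactly the paper's argument. For the enumeration, however, the paper does not peel off the leaf $s_{n-1}$: it builds each CFC element of $D_{n+1}$ from a CFC element $w'$ of the parabolic $A_{n-1}$ generated by $s_1,\ldots,s_{n-1}$ by grafting $t_1$ and/or $t_2$ at the fork (each insertable before or after $s_1$ when $s_1\in supp(w')$, with an inclusion--exclusion correction when $s_1\notin supp(w')$), yielding $D_{n+1}^{CFC}(q)=(1+4q+4q^2)A_{n-1}^{CFC}(q)-(2q+3q^2)A_{n-2}^{CFC}(q)$ and then deducing \eqref{equationd} from the recurrence \eqref{equationa2} for type $A$. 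Your argument instead reformulates the count as $\sum_{T}a\bigl(\Gamma[T]\bigr)q^{|T|}$ over acyclic orientations of induced subdiagrams and conditions on membership of $s_{n-1}$ and $s_{n-2}$ in $T$; I checked the three contributions $D_n^{CFC}(q)$, $qD_{n-1}^{CFC}(q)$ and $2q\bigl(D_n^{CFC}(q)-D_{n-1}^{CFC}(q)\bigr)$, and they do sum to the right-hand side of \eqref{equationd}. Your route has the advantage of producing the recurrence purely within type $D$ and of resting on a general principle (heaps with all labels distinct correspond to acyclic orientations, which is the correct formalization of the ``Coxeter elements of parabolic subgroups'' picture and applies to any diagram); the cost is that this correspondence, which you rightly flag, needs a short justification (every acyclic orientation is realized by a topological sort, any word with distinct letters is reduced, and commutation classes are determined by the orientation). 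The paper's route avoids that lemma entirely but is tied to the already-established type $A$ data.
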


\begin{proof} Let $ w$ be a CFC element of type $D_{n+1}$ and let $\bf w$ be one of its reduced expressions. Consequently, $w$ is a CFC element of type $\tilde{B}_{n+1}$ in which the generator $u$ does not occur. But, according to Theorem~\ref{thmbtilde}, the only such CFC elements are those satisfying condition (a) (and having no u): if $\bf w$ satisfies (b) or (c), u must appear in $\bf w$. Conversely, if all generators occur at most once in $\bf w$, we already saw that $w$ is CFC. 
\\So, any CFC element of type $D_{n+1}$ can be otained from a CFC element $w'$ in $A_{n-1}$ by adding to $w'$ nothing, or one occurrence of $t_1$ (with two choices: before or after $s_1$), or one occurrence of $t_2$ (with two choices), or one occurrence of $t_1$ and $t_2$ (with four choices).  If $s_1$ does not occur in $w'$, we have no choice for adding $t_1$ or/and $t_2$ as $t_1$ and $t_2$ commute with all other generators. This leads to the following recurrence relation:
$$D_{n+1}^{CFC}(q)=(1+4q+4q^2) A_{n-1}^{CFC}(q) -(2q+3q^2)A_{n-2}^{CFC}(q).$$

With this relation and (\ref{equationa2}) in Theorem \ref{thman}, it is easy to derive (\ref{equationd}). The generating function $D^{CFC}(x)$ is computed through classical methods. \end{proof}

\begin{corollary}
We have the generating function 
\begin{equation}\label{equationbtilde}\tilde{B}_{n+1}^{CFC}(q) = D_{n+2}^{CFC}(q) + \frac{2^{n+1}}{1-q^{2(n+1)}}~q^{2(n+1)}+ \frac{2(n+1)}{1-q^{2n+1}}~q^{2n+1}.\end{equation}
Furthermore, the coefficients of $\tilde{B}_{n+1}^{CFC}(q)$ are ultimately periodic of exact period \\2(n+1)(2n+1) and the periodicity starts at length $n+2$.
\end{corollary}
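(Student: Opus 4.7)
The plan is to apply Theorem~\ref{thmbtilde}, which partitions the set $\tilde{B}_{n+1}^{CFC}$ into the three disjoint families (a), (b), (c), and to enumerate each family separately by Coxeter length; these three contributions will be exactly the three terms on the right-hand side of \eqref{equationbtilde}. The three families are disjoint because the conditions on the generator occurrence patterns are mutually exclusive.

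For family (a), the elements are those whose reduced expressions use each generator at most once. Since no long braid relation $(st)^{m_{st}}=1$ can apply to such a word, the commutation class, and hence the element, is determined solely by the unlabeled underlying graph of the Coxeter diagram. The underlying graphs of $\tilde{B}_{n+1}$ and $D_{n+2}$ are isomorphic: both consist of two branch vertices $t_1,t_2$ attached to $s_1$ followed by a path to $u$ (respectively $s_n$). By the preceding proposition, $D_{n+2}^{CFC}$ consists precisely of such elements, so the graph isomorphism yields a length-preserving bijection, contributing the term $D_{n+2}^{CFC}(q)$.

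For family (b), the occurrence conditions force $\ell(w)=2h(n+1)$ where $h=|{\bf w}_{s_1}|/2\geq 1$, since $n$ generators ($s_1,\ldots,s_{n-1},u$) each occur $2h$ times and two generators ($t_1,t_2$) occur $h$ times each. For fixed $h$, the alternating structure is determined by $n+1$ independent binary choices: for each of the $n-1$ pairs $(s_i,s_{i+1})$ with $1\leq i\leq n-1$ (setting $s_n=u$), which of the two generators appears first; for the $(s_0,s_1)$-alternation (with $s_0$ denoting the merged $t_1,t_2$), which side appears first; and for the $(t_1,t_2)$-alternation, which of $t_1,t_2$ appears first. Summing $\sum_{h\geq 1}2^{n+1}q^{2h(n+1)}$ yields the second term. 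For family (c), the conditions force $\ell(w)=k(2n+1)$ with $k\geq 1$. A direct enumeration analogous to the $\tilde{C}_n$ case gives exactly $2(n+1)$ elements per such $k$, obtained from the cyclic starting positions in the periodic word $(t_1t_2s_1\cdots s_{n-1}us_{n-1}\cdots s_1)^\infty$ together with those in its reverse, after accounting for identifications coming from the commutation $t_1t_2=t_2t_1$. The main subtlety is verifying that the enumerated starting positions yield pairwise distinct heaps, which follows by case analysis on the five structural forms listed in Theorem~\ref{thmbtilde}. Summing $\sum_{k\geq 1}2(n+1)q^{k(2n+1)}$ gives the third term.

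For the periodicity statement, $D_{n+2}^{CFC}(q)$ is a polynomial of degree $n+1$ (by the recurrence in the preceding proposition) and vanishes at all lengths $\geq n+2$. The two geometric series have elementary periods $2(n+1)$ and $2n+1$, which are coprime since $\gcd(2n+2,2n+1)=1$; therefore their sum has exact period $\mathrm{lcm}(2(n+1),2n+1)=2(n+1)(2n+1)$. Since $[q^{n+1}]D_{n+2}^{CFC}(q)\neq 0$ while both geometric series vanish at $q^{n+1}$, periodicity cannot begin before length $n+2$; and at length $n+2$ the polynomial contribution is zero, so the periodic part alone determines the coefficient, and the period begins exactly at length $n+2$.
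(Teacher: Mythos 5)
Your decomposition and enumeration of the three families is essentially the paper's argument: the families (a), (b), (c) of Theorem~\ref{thmbtilde} are disjoint, family (a) is in length-preserving bijection with $D_{n+2}^{CFC}$, family (b) contributes $2^{n+1}$ elements at each length that is a multiple of $2(n+1)$, and family (c) contributes $2(n+1)$ elements at each length that is a multiple of $2n+1$ (the paper does this bookkeeping by listing the possible first two letters $s_is_{i+1}$, $s_is_{i-1}$, $us_{n-1}$, $t_1t_2$, $t_1s_1$, $t_2s_1$, which is equivalent to your cyclic-starting-position count). That part, and the computation of the exact period as $\operatorname{lcm}(2(n+1),2n+1)=2(n+1)(2n+1)$, is fine.

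The periodicity discussion, however, rests on a false premise: $D_{n+2}^{CFC}(q)$ has degree $n+2$, not $n+1$. The group $D_{n+2}$ has $n+2$ generators and its CFC elements are exactly those whose reduced words use each generator at most once, so every Coxeter element is CFC of length $n+2$; the leading term is $2^{n+1}q^{n+2}$ (one commutation class per acyclic orientation of the tree-shaped diagram), which is precisely the fact the paper's proof invokes. Hence your claim that ``at length $n+2$ the polynomial contribution is zero, so the periodic part alone determines the coefficient'' is wrong: the coefficient of $q^{n+2}$ in $\tilde{B}_{n+1}^{CFC}(q)$ equals $2^{n+1}$ and comes entirely from $D_{n+2}^{CFC}(q)$, the two geometric series vanishing there. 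Applied with the correct degree, your own reasoning would place the onset of periodicity one step later than the statement asserts, so the argument does not establish the claimed starting point. You should instead argue as the paper does, from the two facts that the leading term of $D_{n+2}^{CFC}(q)$ is $2^{n+1}q^{n+2}$ and that $[q^{n+1}]D_{n+2}^{CFC}(q)\neq 0$.
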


\begin{proof}
Notice that the sets of elements satisfying condition (a), (b) or (c) of Theorem~\ref{thmbtilde} are disjoint. The first term  in (\ref{equationbtilde}) corresponds to the set of CFC elements satisfying (a). This set is clearly in bijection with $D_{n+2}^{CFC}$.\\ The second term  in (\ref{equationbtilde}) corresponds to CFC elements satisfying (b), which have length a multiple of $2(n+1)$, and there are $2^{n+1}$ such elements for each possible length.\\ With the condition on the number of occurrences of each generator, we can see that the  length of elements satisfying (c)  must be a multiple of $2n+1$, and there are $2n+2$ such elements for each possible length (the first two letters can be $s_is_{i+1}$, $s_is_{i-1}$, $us_{n-1}$, $t_1t_2$, $t_1s_1$ or $t_2s_1$). This gives the third term of \eqref{equationbtilde}.
\\Moreover, $D_{n+2}^{CFC}(q)$ is a polynomial in q, and the two other terms of (\ref{equationbtilde}) are periodic, so $\tilde{B}_{n+1}^{CFC}(q)$ is ultimately periodic. The period is the least common multiple of $2(n+1)$ and $2n+1$, which is $2(n+1)(2n+1)$. The beginning of the periodicity is $n+2$, as the leading coefficient of $D_{n+2}^{CFC}(q)$ is $2^{n+1} q^{n+2}$ and $[q^{n+1}]D_{n+2}^{CFC}(q) \neq 0$. \end{proof}

\subsection{Type $\tilde{D}_{n+2}$.}

The situation is very similar in type $\tilde D_{n+2}$ for the characterization, but the generating function takes a slightly different form, due to the specificity of the Coxeter diagram. 
\begin{figure}[!h]
\begin{center}\includegraphics[scale=1]{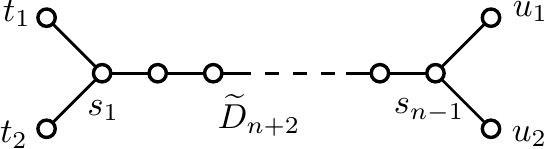}
\end{center}
\caption{Coxeter diagram of type $\tilde{D}_{n+2}$.}
\label{dynkindtilde}
\end{figure}

In what follows, by ``$\bf w$ is an alternating word", we mean that if we replace $t_1$ and $t_2$ by $s_0$, and $u_1$ and $u_2$ by $s_n$, the image of ${\bf w}$ is an alternating word in the sense of Definition~\ref{alternating}, $t_1$ and $t_2$ alternate in $w$, and $u_1$ and $u_2$ alternate in $w$. 
\begin{Theorem}\label{thmdtilde}
An element $w\in \tilde D_{n+2}$ is CFC  if and only if one (equivalently any) of its reduced expressions  \textbf{w}  verifies one of these conditions:
\begin{itemize} 
\item[(a)] each generator occurs at most once in $\bf{w}$, 
\item[(b)] $\bf w$ is an alternating word,  $|{\bf w}_{s_1}|= \cdots=|{\bf w}_{s_n-1}| \geq 2$, and $|{\bf w}_{t_1}|=|{\bf w}_{t_2}|=|{\bf w}_{u_1}|=|{\bf w}_{u_2}|=|{\bf w}_{s_1}|/2$ (in particular, $|{\bf w}_{s_1}|$ is even), 
\item[(c)] $\bf w$ is a subword of $(t_1t_2s_1s_2 \ldots s_{n-1} u_1 u_2 s_{n-1} \ldots s_2 s_1)^ {\infty}$, which is an infinite periodic word, where $t_1$ and $t_2$, $u_1$ and $u_2$ are allowed to commute, such that $|{\bf w}_{s_1}|= \cdots=|{\bf w}_{s_{n-1}}| \geq 2$, and $|{\bf w}_{t_2}|=|{\bf w}_{t_1}|=|{\bf w}_{u_1}|=|{\bf w}_{u_2}|=|{\bf w}_{s_1}|/2$.
\end{itemize}
\end{Theorem}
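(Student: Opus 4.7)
The plan is to follow the template already established in the proofs of Theorems~\ref{thmctilde} and~\ref{thmbtilde}. I would appeal to the classification of FC elements in type $\tilde D_{n+2}$ from \cite{BJN2}, which groups them in five families with the first being alternating elements, and then handle each family separately. Throughout, for $w \in \tilde D_{n+2}^{CFC}$ with reduced expression $\bf w$, I write $H := H_{\bf w}$ and let $H^c$ be its cylindric transformation; the forbidden patterns from Theorem~\ref{propcfc} are the tool used to discard non-CFC candidates.

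For the alternating family, if every generator occurs at most once in $\bf w$ then Remark~\ref{generatoronce} places $w$ in (a). Otherwise, I would transpose the argument from the proof of Theorem~\ref{thmctilde}: take a maximal interval $[\![i_k,i_\ell]\!]$ on which $|{\bf w}_{s_i}|$ attains its maximum; if $i_\ell < n$ (resp.\ $i_k > 0$), a length-$4$ cylindric convex chain or a same-label chain covering relation appears in $H^c$, contradicting Theorem~\ref{propcfc}. Hence the maximum is attained all the way to both ends, which after combining the pairs $(t_1,t_2)$ and $(u_1,u_2)$ forces $|{\bf w}_{s_1}|=\cdots=|{\bf w}_{s_{n-1}}| = |{\bf w}_{t_1}|+|{\bf w}_{t_2}| = |{\bf w}_{u_1}|+|{\bf w}_{u_2}|$. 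Then, as in the parity sub-argument of Theorem~\ref{thmbtilde}, if $|{\bf w}_{t_1}|+|{\bf w}_{t_2}|$ were odd, the restriction of $\bf w$ to $\{t_1,t_2,s_1\}$ would look like $t_1 s_1 t_2 s_1 \cdots s_1 t_1$, producing a length-$3$ cylindric convex chain in $H^c_{\{t_1,s_1\}}$; applying the symmetric argument at the $u$-end yields condition (b).

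For the four non-alternating FC families, I would replicate the case analyses of Theorems~\ref{thmctilde} and~\ref{thmbtilde}. For the zigzag family, inspecting the first two and last two letters of $\bf w$ and the resulting patterns in $H^c_{\{s_i,s_{i+1}\}}$ forces, except in the configurations listed in (c), either a length-$3$ or length-$4$ cylindric convex chain or a same-label chain covering relation. For the three peak families, a direct finite-case inspection of the small number of heap shapes eliminates everything except the length-$2(n+2)$ heaps already in (c). The converse direction amounts to verifying that for each of (a), (b), (c), neither a same-label chain covering relation nor a cylindric convex chain of length $\geq 3$ occurs in $H^c$ outside of $H^c_{\{t_1,t_2,s_1\}}$ or $H^c_{\{s_{n-1},u_1,u_2\}}$, and that inside these subsets no length-$4$ convex chain forms; this proceeds exactly as in the converse of Theorem~\ref{thmantilde}. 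The main obstacle I foresee is the zigzag case: because both ends of the $\tilde D$ diagram carry a commuting fork, the first two and last two letters of $\bf w$ each contribute independent $t_1/t_2$ (resp.\ $u_1/u_2$) choices, so the sub-case enumeration is genuinely larger than in types $\tilde C$ and $\tilde B$, and the parity obstruction needed for condition (b) must be run at both ends simultaneously.
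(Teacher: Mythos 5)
Your proposal is correct and takes essentially the same approach as the paper: the paper's own proof of Theorem~\ref{thmdtilde} consists of the single remark that the proof of Theorem~\ref{thmbtilde} carries over once one adds the replacements $(u,u,\ldots,u)\rightarrow(u_1,u_2,\ldots)$ or $(u_2,u_1,\ldots)$. Your write-up is a faithful expansion of exactly that reduction, including the correct observation that the parity obstruction must be run at both forked ends and that the zigzag sub-case enumeration grows accordingly.
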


\begin{proof} The same proof as for Theorem~\ref{thmbtilde} holds, one only needs to add the replacements $(u,u,\ldots u)$ $\rightarrow(u_1, u_2, \ldots)$ or $(u_2, u_1, \ldots)$. \end{proof}

Again, we can compute the corresponding generating function.

\begin{proposition}
We have the generating function 
\begin{equation}\label{equationdtilde}\tilde{D}_{n+2}^{CFC}(q) = Q_{n+2}(q) + \frac{2^{n+2}+2(n+2)}{1-q^{2(n+1)}}~q^{2(n+1)},\end{equation}
where $Q_{n+2}(q)$ is a polynomial in q of degree $n+3$ such that $Q_4(q) = 1+5q+14q^2+28q^3+33q^4+16q^5$, $Q_5(q) =1+6q+20q^2+46q^3+73q^4+72q^5+32q^6$, and for $n \geq 4$:
\begin{equation*}Q_{n+2}(q)=(2q+1)Q_{n+1}(q) -qQ_n(q). \end{equation*}
Moreover, the coefficients of $\tilde{D}_{n+2}^{CFC}(q)$ are ultimately periodic of exact period 2(n+1), and the periodicity starts at length $n+2$.
\end{proposition}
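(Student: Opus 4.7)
The strategy parallels the proofs for types $\tilde{C}_n$ and $\tilde{B}_{n+1}$: by Theorem~\ref{thmdtilde} the CFC elements of $\tilde{D}_{n+2}$ partition into three disjoint classes (a), (b), (c), and I will compute the contribution of each class to $\tilde{D}_{n+2}^{CFC}(q)$ separately. Class (a) is finite (at most $n+3$ generators, each appearing at most once) and will contribute the polynomial $Q_{n+2}(q)$.

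For classes (b) and (c), a reduced word in class (b) has length $(n-1)\cdot 2h+4h=2(n+1)h$ for some $h\geq 1$, and the alternating condition leaves exactly two local orderings at each of the $n+2$ non-commuting edges of the Coxeter diagram ($t_1$--$s_1$, $t_2$--$s_1$, the $n-2$ edges $s_i$--$s_{i+1}$, and $s_{n-1}$--$u_1$, $s_{n-1}$--$u_2$), giving $2^{n+2}$ elements per length. For class (c), the length must be a multiple of $2(n+1)$ (the period of the infinite word of condition (c)), and for each such length the initial two letters admit $2(n+2)$ choices ($s_is_{i\pm 1}$ for $i=1,\ldots,n-1$, plus three choices $t_1t_2,t_1s_1,t_2s_1$ on the left and three symmetric choices on the right), giving $2(n+2)$ elements. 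Summing the corresponding geometric series produces the second term of~\eqref{equationdtilde}.

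For class (a), I would first identify $Q_{n+2}(q)$ with the tree generating function $\sum_{I}q^{|I|}2^{|E(I)|}$, the sum ranging over subsets $I$ of the vertex set of the Coxeter diagram, with $E(I)$ the edges of the induced subgraph. Indeed, a CFC element with each generator at most once is uniquely determined by its support $I$ together with an orientation of each edge of the induced subgraph on $I$; since the Coxeter diagram is a tree, every such orientation gives an acyclic heap. Splitting the sum according to the restrictions of $I$ to the two forks $\{t_1,t_2\}$ and $\{u_1,u_2\}$ and to whether $s_1,s_{n-1}\in I$ expresses $Q_{n+2}(q)$ as a linear combination (with coefficients independent of $n$) of four path generating functions over $\{s_1,\ldots,s_{n-1}\}$ with prescribed endpoint statuses. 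The transfer matrix $\bigl(\begin{smallmatrix}2q&q\\1&1\end{smallmatrix}\bigr)$ for extending the path by one vertex has characteristic polynomial $\lambda^2-(2q+1)\lambda+q$, so each of these path generating functions, and hence $Q_{n+2}(q)$, satisfies $f_{n+2}=(2q+1)f_{n+1}-qf_n$; combined with the base cases $Q_4$ and $Q_5$ (verified by direct enumeration over the trees $\tilde{D}_4$ and $\tilde{D}_5$), this yields the claimed recurrence.

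Finally, the ultimate periodicity of the coefficients follows since $\tilde{D}_{n+2}^{CFC}(q)$ is the sum of a polynomial and a rational function with denominator $1-q^{2(n+1)}$. The main obstacle will be verifying that the period is \emph{exactly} $2(n+1)$ and that periodicity starts at length $n+2$; as in the analogous proofs, I would argue this by computing the leading behavior of $Q_{n+2}(q)$, showing in particular that it has degree $n+3$ with leading coefficient $2^{n+2}$ (corresponding to the full support $I=S$) and that $[q^{n+2}]Q_{n+2}(q)\neq 0$.
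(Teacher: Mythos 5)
Your decomposition into the three disjoint classes (a), (b), (c) of Theorem~\ref{thmdtilde}, and your counts for classes (b) and (c) (lengths multiples of $2(n+1)$, with $2^{n+2}$ and $2(n+2)$ elements per admissible length respectively), coincide with the paper's proof. Where you genuinely diverge is class (a): the paper obtains the recurrence for $Q_{n+2}$ by building each element of class (a) from a CFC element of type $D_{n+1}$ (adding $u_1$, $u_2$, both, or neither, with the caveat when $s_{n-1}$ is absent), which yields $Q_{n+2}(q)=(1+4q+4q^2)D_{n+1}^{CFC}(q)-(2q+3q^2)D_{n}^{CFC}(q)$ and then invokes the recurrence \eqref{equationd} for $D^{CFC}$. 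You instead identify $Q_{n+2}(q)$ with $\sum_I q^{|I|}2^{|E(I)|}$ over supports $I$ (valid since the diagram is a tree, so commutativity classes of square-free words on support $I$ are exactly the $2^{|E(I)|}$ orientations of $\Gamma[I]$, all CFC by Remark~\ref{generatoronce}) and extract the recurrence from the transfer matrix $\bigl(\begin{smallmatrix}2q&q\\1&1\end{smallmatrix}\bigr)$, whose characteristic polynomial $\lambda^2-(2q+1)\lambda+q$ gives the claim. Your route is self-contained and explains conceptually why the same recurrence governs all these families, while the paper's route reuses the type-$D$ computation and produces the explicit bridge formula between $Q_{n+2}$ and $D^{CFC}$; both are correct. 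One remark: you also attempt to justify the exact period and the onset of periodicity (the paper only asserts "the property of ultimate periodicity is clear"); since $\deg Q_{n+2}=n+3$ with leading coefficient $2^{n+2}$ while the periodic part vanishes at length $n+3+2(n+1)$, the coefficient sequence actually only becomes periodic after length $n+3$, so the stated onset at length $n+2$ should be examined with care --- but this is an issue with the statement's convention rather than with your argument.
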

Note that the generating function $Q(x)$ of the polynomials $Q_{n}(q)$ is computable through classical techniques. However it does not have a nice expression.
\begin{proof} Notice that the sets of elements satisfying condition (a), (b) or (c) of Theorem~\ref{thmdtilde} are disjoint. The first term  in (\ref{equationdtilde}) corresponds to the set of CFC elements satisfying (a). Any such element can be otained from a CFC element $w$ of type $D_{n+1}$ by adding nothing, or one occurrence of $u_1$ (with two choices: before or after $s_{n-1}$), or one occurrence of $u_2$ (with two choices), or one occurrence of $u_1$ and $u_2$ (with four choices). If $s_{n-1}$ does not occur in $w$, we have no choice for adding $u_1$ or/and $u_2$, as $u_1$ and $u_2$ commute with all other generators. This leads to the following recurrence relation:
$$Q_{n+2}(q)=(1+4q+4q^2) D_{n+1}^{CFC}(q) -(2q+3q^2)D_{n}^{CFC}(q).$$
Using this and \eqref{equationd}, we find the expected recurrence relation for the polynomials $Q_n(q)$.
 Elements satisfying (b) have length a multiple of $2(n+1)$ and there are $2^{n+2}$ of them. Elements satisfying (c) have also length a multiple of $2(n+1)$, and there are $2(n+2)$ of them (by inspecting the first two letters). The property of ultimate periodicity is clear. \end{proof}

\subsection{Exceptional types}
Exceptional finite types are $E_6,~E_7,~E_8,~F_4,~H_2,~H_3,$ $~H_4,~G_2,~I_2(m)$. Enumerating CFC elements according to the length in these two last ones is trivial, while other groups are special cases of the families $E_n$ ($n\geq 6$), $F_n$ ($n\geq 4$), $H_n$ ($n \geq 3$). It is shown in~\cite{BBEEGM} that these families contain a finite number of CFC elements. It is possible to apply our method to characterize them in terms of cylindric heaps and obtain recurrence relations for their generating functions according to the length. However, we leave this to the interested reader.
For exceptional affine types having a finite number of CFC elements ($\tilde{E}_8$ and $\tilde{F}_4$), generating functions of CFC elements are polynomials recursively computable as in type $A$, because $\tilde{E}_8=E_9$ and $\tilde{F}_4=F_5$. It remains to study three exceptional affine types having an infinite number of CFC elements. Their Coxeter diagrams are represented below. For these types, we only give the result without detailing the proof. For the interested readers, we just sketch it: as for other affine types, we look at the classification of FC elements in \cite[Lemmas 5.2--5.4]{BJN2}, and see that there exists CFC elements only for some explicit length, which must be either bounded or an integer multiple of a constant depending on the considered type.

\begin{figure}[!h]
\includegraphics{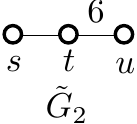}\hspace*{1cm}
\includegraphics{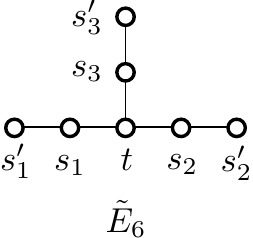} \hspace*{1cm}
\includegraphics[scale=1]{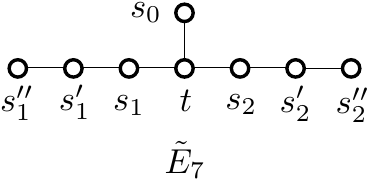}
\caption{Coxeter diagrams of types $\tilde{G}_{2}$, $\tilde{E}_{6}$ and $\tilde{E}_{7}$.}
\end{figure}

\begin{Theorem} Let $W= \tilde{G}_2$ (\emph{resp.} $\tilde{E}_6$, \emph{resp.} $\tilde{E}_7$) be an exceptional affine Coxeter group. We define in $W$ the element $w_2$ as $utsut$ (\emph{resp.} $s_1ts_1's_1s_2ts_2's_2s_3ts_3's_3$, \emph{resp.}  $s_1ts_1's_0s_1s_1''ts_1's_1s_2ts_2's_0s_2s_2''ts_2's_2$). An element $w\in W$ is CFC if and only if one (equivalently any) of its reduced expressions $\bf w$ verifies one of these conditions:
\begin{itemize}
\item $\bf w$ belongs to a finite set depending on $W$,
\item ${\bf w}= {\bf w}_1 {\bf w}_2 ^n {\bf w}_3$, where n is a non-negative integer, ${\bf w_2} \in R(w_2)$ and $\bf w_3w_1=w_2$.
\end{itemize}
Consequently, the following generating functions hold:
\begin{equation*}\tilde{G}_2^{CFC}(q)= R_1(q) +\frac{6q^5}{1-q^5},
\end{equation*}
\begin{equation*}\tilde{E}_6^{CFC}(q)= R_2(q) +\frac{23q^{12}}{1-q^{12}},
\end{equation*}
\begin{equation*}\tilde{E}_7^{CFC}(q)= R_3(q) +\frac{45q^{18}}{1-q^{18}},
\end{equation*}
where $R_1(q),~R_2(q)$ and  $R_3(q)$ are polynomials.
\end{Theorem}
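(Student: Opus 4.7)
The plan is to mimic the strategy used for the affine families $\tilde A$, $\tilde B$, $\tilde C$, $\tilde D$: invoke the classification of FC elements in each exceptional affine type from \cite[Lemmas 5.2--5.4]{BJN2}, and then apply Theorem~\ref{propcfc} to discard those whose cylindric transformation contains either a self-label chain covering relation $i \prec_c j$ or a cylindric convex chain $i_1 \prec_c \cdots \prec_c i_{m_{st}}$. In each exceptional type this reduces the classification to a finite case distinction, exactly as was done for $\tilde C_n$ in Theorem~\ref{thmctilde} and $\tilde B_{n+1}$ in Theorem~\ref{thmbtilde}.

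First I would run through the five families of FC reduced expressions produced by~\cite{BJN2} for each of $\tilde G_2$, $\tilde E_6$, $\tilde E_7$. In every family the words are parametrised by finite combinatorial data together with an integer $k$ measuring how many times a certain pattern repeats. For each family I would draw the heap, form its cylindric transformation $H^c$ following Definition~\ref{def}, and look for a forbidden pattern of Theorem~\ref{propcfc}. All families except the one corresponding to the ``full rotation around the affine diagram'' generate such a pattern as soon as $k$ is large enough; the finitely many surviving CFC words contribute the polynomial $R_i(q)$.

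The remaining family is generated by the designated element $w_2$ (namely $utsut$ in $\tilde G_2$, and the explicit words given in $\tilde E_6$ and $\tilde E_7$), which is a full traversal of the affine Coxeter diagram. I would verify that for every $n \geq 0$, the heap of $\mathbf{w}_2^{n+1}$ has a cylindric transformation in which each chain $H^c_s$ is a cycle of length $n+1$ and each two-generator subheap $H^c_{\{s,t\}}$ with $m_{st} \geq 3$ is a bipartite cycle of length $2(n+1)$ containing no short convex sub-chain, so the conditions of Theorem~\ref{propcfc} are met. By Proposition~\ref{injectif}, every word of the form $\mathbf{w}_1 \mathbf{w}_2^n \mathbf{w}_3$ with $\mathbf{w}_3 \mathbf{w}_1 = \mathbf{w}_2$ is a cyclic shift of $\mathbf{w}_2^{n+1}$, hence has the same cylindric transformation and is also CFC. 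The converse direction is precisely what the previous paragraph excludes.

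The generating functions then split as $R_i(q)$ plus a periodic tail of shape $\frac{c_i\, q^{|w_2|}}{1-q^{|w_2|}}$, where $c_i$ counts the number of distinct CFC elements at each length $k|w_2|$, $k\ge 1$, produced by the periodic family. The main obstacle will be determining the values $c_1=6$, $c_2=23$, $c_3=45$: because the exceptional diagrams (especially $\tilde E_6$ and $\tilde E_7$) have many commuting pairs of generators around the branching nodes, distinct cyclic shifts of $\mathbf{w}_2^k$ can be commutation-equivalent and hence represent the same group element, so the count requires an explicit orbit analysis of the cyclic-shift action modulo commutation equivalence. Once that combinatorial bookkeeping is performed, the three stated generating functions follow immediately.
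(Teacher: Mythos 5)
Your plan follows exactly the route the paper itself indicates: the paper gives no detailed proof for these exceptional affine types, only a sketch saying to run through the classification of FC elements in \cite[Lemmas 5.2--5.4]{BJN2} and use Theorem~\ref{propcfc} to see that CFC elements occur only in bounded length or in lengths that are multiples of $|w_2|$. Your proposal is a correct and in fact more detailed expansion of that same sketch (including the use of Proposition~\ref{injectif} for the periodic family and the caveat about counting the constants $6$, $23$, $45$ modulo commutation equivalence), so there is nothing to compare beyond noting that you have filled in steps the paper leaves to the reader.
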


\subsection{Logarithmic CFC elements}\label{logarithmic}

Here we study some particular elements of a general Coxeter group $W$. Recall that for $w \in W^{CFC}$, the support $supp(w)$ of $w$ is the set of generators that appear in a (equivalently, any) reduced expression of $w$. It is well known that if ${ w} \in W$ and $k$ is a positive integer, then $\ell({ w}^k) \leq k \ell({ w})$. If equality holds for all $k \in \mathbb{N}^*$ ($i.e$ ${ w}^k$ is reduced for all k), we say that $ w$ is logarithmic (see \cite{BBEEGM} for more information about logarithmic elements). For example, it is trivial that there is no logarithmic element in finite Coxeter groups, because there is a finite number of reduced expressions. By using our characterizations for affine types in Theorems~\ref{thmantilde}, \ref{thmbtilde}, \ref{thmctilde}, \ref{thmdtilde},  we derive the following consequence. A generalization of this result was proved for all Coxeter groups in \cite[Corollary~E]{MAR}, by using geometric group theoretic methods. Nevertheless, as our approach is different and more combinatorial, we find interesting to give our proof, although it only works for affine types.

\begin{Theorem} For $W$=$ \tilde A, \tilde B , \tilde C,$ or $\tilde D$,  if $w$ is a CFC element, $w$ is logarithmic if and only if a (equivalently, any) reduced expression ${\bf w}$ of $w$ has full support (i.e all generators occur in ${\bf w}$). In particular, there is a finite number of CFC elements which are not logarithmic.
\end{Theorem}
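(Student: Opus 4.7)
I will deduce the theorem directly from the explicit classifications of CFC elements in affine types given in Theorems~\ref{thmantilde}, \ref{thmctilde}, \ref{thmbtilde}, and~\ref{thmdtilde}. In each of the four cases, those theorems partition $W^{CFC}$ into three families~(a), (b), (c): family~(a) consists of elements whose reduced expressions use each generator \emph{at most} once, while families~(b) and~(c) impose that \emph{every} generator of $S$ occurs in $\mathbf{w}$. Hence the equivalence ``$w$ logarithmic $\iff$ $\mathrm{supp}(w) = S$'' reduces to showing that the elements of family~(a) are precisely the non-logarithmic ones (setting aside the identity element, which is vacuously logarithmic but has empty support).

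For the direction \emph{non-full support $\Rightarrow$ non-logarithmic}, I invoke the classical structural feature that every proper sub-diagram of an affine Coxeter diagram is a disjoint union of diagrams of finite type. Consequently, if $\mathrm{supp}(w) \subsetneq S$, the standard parabolic subgroup $W_{\mathrm{supp}(w)}$ is a finite Coxeter group, so $w$ has finite order. For $w \neq e$ this yields $w^N = e$ for some $N \geq 1$, whence $\ell(w^N) = 0 < N\ell(w)$ and $w$ is not logarithmic.

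Conversely, for \emph{full support $\Rightarrow$ logarithmic}, I fix a reduced expression $\mathbf{w}$ of $w$ and verify that $\mathbf{w}^k$ is again a CFC word for every $k \geq 1$; being CFC it is in particular FC, hence reduced, and so $\ell(w^k) = k\ell(w)$. In family~(a) with full support, $\mathbf{w}$ uses each generator exactly once, and a direct inspection shows that $\mathbf{w}^k$ is alternating with every generator occurring exactly $k$ times, hence falls into family~(b) for $k \geq 2$. In families~(b) and~(c), all multiplicities $|\mathbf{w}_s|$ scale by $k$ under concatenation, the alternating (or cylindric) structure is preserved, and in case~(c) the length of $\mathbf{w}$ is automatically a multiple of the period of the underlying periodic word: indeed the boundary conditions $|\mathbf{w}_t| = |\mathbf{w}_u| = |\mathbf{w}_{s_1}|/2$ (and the analogues in types $\tilde B, \tilde D$) force $|\mathbf{w}_{s_1}|$ to be even, so $n\,|\mathbf{w}_{s_1}|$ is a multiple of the period $2n$. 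Consequently $\mathbf{w}^k$ lies in the same family as $\mathbf{w}$, closing the argument.

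Finally, for the finiteness statement, every non-logarithmic CFC element lies in some proper standard parabolic subgroup $W_J$ with $J \subsetneq S$; each such $W_J$ is a finite Coxeter group by the paragraph above, and therefore contains only finitely many CFC elements, while $S$ has only finitely many proper subsets. The sum is finite. The main obstacle I anticipate is the concatenation check in case~(c) for types $\tilde B, \tilde C, \tilde D$: one must confirm that the zigzag pattern, together with the half-integral counts at the two ``end'' generators of the diagram, is inherited when $\mathbf{w}$ is repeated. As sketched above, this is a straightforward parity/periodicity observation, but it is the only step where the different geometry of types $\tilde B, \tilde C, \tilde D$ has to be examined separately.
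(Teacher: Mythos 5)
Your overall strategy matches the paper's: use the classifications of Theorems~\ref{thmantilde}, \ref{thmctilde}, \ref{thmbtilde}, \ref{thmdtilde}; for non-full support, pass to a proper standard parabolic subgroup, which is finite by the classical structure of affine diagrams, so $w$ has finite order and cannot be logarithmic; for full support, show that powers remain reduced; finiteness follows since the non-logarithmic CFC elements live in finitely many finite parabolic subgroups. All of that is fine and is essentially the paper's argument.

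There is, however, a genuine error in your treatment of family~(a) with full support, i.e.\ the Coxeter elements. You claim that a direct inspection shows $\mathbf{w}^k$ is alternating with every generator occurring exactly $k$ times and hence falls into family~(b) for $k\geq 2$. This is true in types $\tilde A_{n-1}$ and $\tilde C_n$, but false in types $\tilde B_{n+1}$ and $\tilde D_{n+2}$: there, condition~(b) requires $|\mathbf{w}_{t_1}|=|\mathbf{w}_{t_2}|=|\mathbf{w}_{s_1}|/2$ (and the analogous constraint at the other fork in type $\tilde D$), whereas in $\mathbf{w}^k$ each of $t_1,t_2,s_1$ occurs exactly $k$ times. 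Equivalently, after the substitution $t_1,t_2\mapsto s_0$ the letter $s_0$ occurs $2k$ times against $k$ occurrences of $s_1$, so the word is not even alternating; the paper points out explicitly, in the remark following its proof, that powers of Coxeter elements are never CFC in types $\tilde B_{n+1}$ and $\tilde D_{n+2}$. So your route ``$\mathbf{w}^k$ is CFC, hence FC, hence reduced'' breaks down precisely for Coxeter elements in these two types, and this case cannot be absorbed into families~(b) or~(c). The paper closes this gap by citing Speyer's theorem (reference \cite{SPE}) that powers of Coxeter elements in infinite Coxeter groups are reduced, which gives $\ell(w^k)=k\ell(w)$ without any claim that $w^k$ is FC. You need this (or some substitute argument) for types $\tilde B$ and $\tilde D$. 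The rest of your proof --- the parabolic argument, the scaling of multiplicities in families~(b) and~(c), and the parity/periodicity check in case~(c) --- is sound.
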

\begin{proof} Let $w$ be in $W^{CFC}$ and let $\bf w$ be one of its reduced expressions. Assume that ${\bf w}$ has not full support. In this case, $w$ belongs to a proper parabolic subgroup of $W$, which is a finite Coxeter group by a classical property of affine Coxeter groups (see  \cite{HUM}). So $w$ is not logarithmic, and there is a finite number of such elements, which correspond to elements satisfying (a) in Theorems~\ref{thmantilde}, \ref{thmbtilde}, \ref{thmctilde}, \ref{thmdtilde} and such that at least one generator does not occur in $\bf w$.
\\Conversely, assume that $w$ is a CFC element with full support. According to Theorems~\ref{thmantilde}, \ref{thmbtilde}, \ref{thmctilde}, \ref{thmdtilde},  ${\bf w}$ must satisfy (b) or (c) or has to be a Coxeter element ($i.e$ each generator occurs exactly once in {\bf w}). If $w$ is a Coxeter element, $w$ is logarithmic (see \cite{SPE}). If $\bf w$ satisfies (b) (\emph{resp.} (c)), it is easy to verify that ${\bf w}^k$ also satisfies (b) (\emph{resp.} (c)) and  is therefore reduced. It follows that $w$ is logarithmic.  \end{proof}
We can also notice that the powers of Coxeter elements are always CFC in affine types $\tilde{A}_{n-1}$ and $\tilde{C}_n$ because they satisfy the alternating word condition, but are never CFC elements in types $\tilde{B}_{n+1}$ and $\tilde{D}_{n+2}$ (because they satisfy neither the alternating word condition nor condition (c)).

\section{CFC involutions} A natural question that arises in the study of FC elements is to compute the number of FC involutions in finite and affine Coxeter groups (as this number is, for some groups, the sum of the dimensions of irreducible representations of a natural quotient of the Iwahori-Hecke algebra associated to the group, see \cite{FAN}). Similarly, we now focus on CFC elements which are involutions. The main result is that there is always a finite number of such elements in all Coxeter groups, and we are able to characterize them in terms of words. We also use the characterization of CFC elements to enumerate CFC involutions in finite and affine Coxeter groups.

\subsection{Finiteness and characterization of CFC involutions}

\begin{Theorem}\label{finiteness}
Let $W$ be a Coxeter group and let I($W$) be its subset of involutions. The set $W^{CFC}\cap $I($W$)  is finite. Moreover an element $w$ belongs to $W^{CFC}\cap $I($W$) if and only if for any generator s in  $supp(w)$, $|w_s|=1$ and for all t such that $m_{st} \geq 3$, $|w_t| =0$ (i.e two non commuting generators can not occur in $w$).
\end{Theorem}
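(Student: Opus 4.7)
The plan is to handle the forward direction by direct verification and the converse via self-duality of the heap. For the forward direction, a product $w=s_{i_1}\cdots s_{i_k}$ of pairwise distinct commuting generators satisfies $w^2=\prod s_{i_j}^2=e$, so $w$ is an involution; moreover, every reduced expression for $w$ is a permutation of $\{s_{i_1},\ldots,s_{i_k}\}$, and cyclic shifts of such permutations are again such permutations, hence reduced FC expressions, so $w$ is CFC. The key input for the converse is the observation that for a CFC (hence FC) involution $w$, the words ${\bf w}$ and ${\bf w}^{-1}$ are both reduced expressions for $w=w^{-1}$; Matsumoto's theorem therefore yields a commutation equivalence ${\bf w}\sim{\bf w}^{-1}$, so $H_{\bf w}$ and $H_{{\bf w}^{-1}}$ agree as labeled posets. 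Since $H_{{\bf w}^{-1}}$ is the opposite poset of $H_{\bf w}$ with the same labels, this exhibits $H_{\bf w}$ as \emph{self-dual}: there is a label-preserving anti-automorphism $\sigma:H_{\bf w}\to H_{\bf w}$.

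First I would show that each generator appears at most once in ${\bf w}$. Suppose $|w_s|=k\geq 2$ and let $s^{(1)}\prec\cdots\prec s^{(k)}$ be the $s$-chain. Self-duality forces $\sigma(s^{(1)})=s^{(k)}$, and since $\sigma$ sends minima to maxima, the statements "$s^{(1)}$ is minimal in $H_{\bf w}$" and "$s^{(k)}$ is maximal in $H_{\bf w}$" are equivalent. If both held, the cylindric transformation would add a same-label edge $s^{(k)}\prec_c s^{(1)}$ to $H^c$, which Theorem~\ref{propcfc} forbids. Hence neither holds, and in particular some $y_1\prec s^{(1)}$ exists. Descending iteratively in the finite poset $H_{\bf w}$ produces a minimal element $y_0\prec y_{m-1}\prec\cdots\prec s^{(1)}$. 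The label $u$ of $y_0$ must satisfy $|w_u|=1$: if $|w_u|\geq 2$, the same CFC plus self-duality argument would force the first element of the $u$-chain to be non-minimal, contradicting $y_0$ (either itself, if it is the first $u$-element, or else a necessarily smaller first $u$-element lying below it) being minimal. But then uniqueness of the $u$-labeled element gives $\sigma(y_0)=y_0$, so $y_0$ is maximal as well---contradicting $y_0\prec y_1$.

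Given now that each generator appears at most once, I would rule out non-commuting pairs in $\operatorname{supp}(w)$. If $s,t\in\operatorname{supp}(w)$ with $m_{st}\geq 3$, their unique occurrences are comparable in $H_{\bf w}$, say $s\prec t$. Label uniqueness forces $\sigma(s)=s$ and $\sigma(t)=t$, and the anti-automorphism property then yields $s\prec t\iff t\prec s$, impossible. Thus all generators in $\operatorname{supp}(w)$ pairwise commute, and combined with the first step, $w$ takes the claimed form. Finiteness is then automatic: the CFC involutions are in bijection with the subsets of $S$ consisting of pairwise commuting generators, a finite family since $S$ is finite. The main obstacle I anticipate is the descent argument in Step 1: one must carefully verify that the CFC plus self-duality dichotomy can be invoked along the entire cascade and that the argument at the terminating minimal element avoids circularity in its use of the same CFC plus self-duality reasoning applied to the new generator $u$.
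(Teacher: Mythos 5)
Your argument is correct, but it follows a genuinely different route from the paper's. The paper works entirely at the level of words: it takes a cyclic shift $\mathbf{w}'=s\mathbf{w}_1$ of a reduced expression starting with $s$, observes that $\mathbf{w}'$ is a reduced expression of an FC \emph{involution} (a conjugate of $w$), and invokes Stembridge's criterion that the set of reduced words of an FC involution is palindromic; if $|w_s|\geq 2$ this produces a commutation-equivalent word $s\mathbf{w}_2s$, whose cyclic shift $ss\mathbf{w}_2$ is not reduced, and if $|w_s|=1$ the same palindromicity forces the single $s$ to commute with everything in its way, giving the commutation condition. You instead encode the involution property as self-duality of the heap --- a label-preserving anti-automorphism $\sigma$ of $H_{\mathbf{w}}$ obtained from the commutation equivalence $\mathbf{w}\sim\mathbf{w}^{-1}$ --- and combine it with Theorem~\ref{propcfc}: the equivalence ``$s^{(1)}$ minimal $\iff$ $s^{(k)}$ maximal'' cannot have both sides true (that would create a same-label edge in $H^c$), so neither holds, and the descent to a minimal element whose label has multiplicity one yields a $\sigma$-fixed point that is simultaneously minimal and maximal, a contradiction; the commutation step then follows from $\sigma$ fixing both occurrences. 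Both proofs ultimately rest on the reversed word being again a reduced word of $w$; the paper outsources the consequences of this to Stembridge's palindromic characterization and stays with cyclic shifts of words, whereas yours is self-contained modulo the paper's own cylindric machinery and stays inside the heap. A small streamlining of your Step 1: once the dichotomy shows that every minimal element of $H_{\mathbf{w}}$ has a label of multiplicity one, every minimal element is $\sigma$-fixed and hence isolated, so $H_{\mathbf{w}}$ is an antichain; this delivers multiplicity one \emph{and} pairwise commutation in a single stroke, making your Step 2 unnecessary. As written, though, both steps are sound, and the forward direction and the finiteness conclusion (commuting subsets of the finite set $S$) are handled correctly.
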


\begin{proof}  Let $w$ be a CFC involution, let s be a generator in $supp(w)$ and $\bf w$ be a reduced expression of $w$. Assume that $|w_s| \geq 2$. Consider a cyclic shift ${\bf w'}=s{\bf w_1}$  of $\bf w$ which begins with s. As $w$ and s are involutions,  $\bf w'$ is an expression of an involution. Moreover as $w$ is CFC, $\bf w'$ corresponds to a FC element $w'$. According to \cite{STEM1}, a FC element $w$ is an involution if and only if R($w$) is palindromic, i.e R($w$) includes the reverse of all of its members. Applying this to $\bf w'$ allows us to say that $\bf w'$ is commutation equivalent to a word $s{\bf w_2} s$. So a cyclic shift of $\bf w$ is commutation equivalent to $ss{\bf w_2}$, which is in contradiction with the CFC property of $w$. Therefore we have $|w_s| =1$. We consider $\bf w'$ as before. As R($w'$) is palindromic, we can conclude that all generators in $supp(w')$=$supp(w)$ commute with s. \end{proof}

\begin{Remark}\label{remarkinv}
The number of CFC involutions in a Coxeter group $W$ depends only on the edges of the Coxeter diagram, without taking into account  the values $m_{st}$.
\end{Remark}

\subsection{CFC involutions in classical types}

Let us enumerate the CFC involutions in classical types, according to their Coxeter length. If $W$ is a Coxeter group, we define $W^{CFCI}(q):=\sum_{w \in W^{CFC}\cap I(W)}q^{\ell(w)}$.

\begin{Theorem}\label{ainvo} In types $A$, $B$, and $\tilde{C}$ the following relation holds for all $n\geq2$:
\begin{equation}\label{equationinva}A^{CFCI}_{n}(q)=B^{CFCI}_{n}(q)= \tilde{C}^{CFCI}_{n-1}(q)=A^{CFCI}_{n-1}(q)+qA^{CFCI}_{n-2}(q),\end{equation}
and $A^{CFCI}_{0}(q)=1$, $A^{CFCI}_{1}(q)=1+q$.
Moreover, we can compute the generating function 
$$A^{CFCI}(x):=\sum_{n=1}^\infty A^{CFCI}_{n-1}(q)x^{n}=x\frac{1+qx}{1-x-qx^2}.$$

\end{Theorem}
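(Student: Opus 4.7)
The plan is to reduce the entire statement to counting independent subsets (anticliques) in a path graph by size, using Theorem~\ref{finiteness} and Remark~\ref{remarkinv}. By Theorem~\ref{finiteness}, a CFC involution $w$ of a Coxeter group $W$ is completely determined by its support $T \subseteq S$, and $T$ must satisfy that no two of its elements are joined by an edge of the Coxeter diagram $\Gamma$ (equivalently, all elements of $T$ commute pairwise). Conversely, given any such $T$, the product $\prod_{s \in T} s$ is a well-defined involution which lies in $W^{CFC}$ (by Remark~\ref{generatoronce}, as each generator occurs at most once), and whose Coxeter length equals $|T|$. Consequently $W^{CFCI}(q)$ coincides with the independence polynomial $\sum_{T} q^{|T|}$ of the underlying unlabeled Coxeter graph, as already foreshadowed by Remark~\ref{remarkinv}.

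In each of the three cases $A_n$, $B_n$, and $\tilde{C}_{n-1}$, the underlying Coxeter graph is a path on $n$ vertices (for $\tilde C_{n-1}$ this is the path $t, s_1,\ldots, s_{n-2}, u$ of length $n$, and for $B_n$ the edge labels are irrelevant). This immediately gives the equalities $A^{CFCI}_n(q) = B^{CFCI}_n(q) = \tilde{C}^{CFCI}_{n-1}(q)$, so it suffices to prove the recurrence for $A^{CFCI}_n(q)$.

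To obtain the recurrence, I would partition the independent sets of the path $v_1, \ldots, v_n$ according to whether or not $v_n \in T$. The independent sets with $v_n \notin T$ are exactly the independent sets of the sub-path $v_1, \ldots, v_{n-1}$, contributing $A^{CFCI}_{n-1}(q)$. Those with $v_n \in T$ must exclude $v_{n-1}$, so removing $v_n$ identifies them with the independent sets of the sub-path $v_1, \ldots, v_{n-2}$, contributing a factor $q \cdot A^{CFCI}_{n-2}(q)$ (the extra $q$ accounts for the length contribution of $v_n$). Summing yields the claimed recurrence, and the initial values $A^{CFCI}_0(q) = 1$ (only the identity) and $A^{CFCI}_1(q) = 1 + q$ (identity and $s_1$) are immediate.

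For the generating function, set $F(x) := \sum_{n \geq 0} A^{CFCI}_n(q) x^n$. Multiplying the recurrence by $x^n$ and summing for $n\ge 2$, one obtains $F(x)(1 - x - qx^2) = 1 + qx$, whence $A^{CFCI}(x) = x F(x) = x(1+qx)/(1-x-qx^2)$, matching the stated closed form. No serious obstacle arises in this argument: all of the nontrivial content was packaged into Theorem~\ref{finiteness}, and what remains is the elementary observation that each of the three Coxeter diagrams is a path on $n$ vertices, combined with the classical Fibonacci-like recurrence for its independence polynomial.
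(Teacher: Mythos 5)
Your proposal is correct and follows essentially the same route as the paper: the equalities among types $A$, $B$, $\tilde C$ come from the fact that only the underlying unlabeled Coxeter graph matters (Remark~\ref{remarkinv}), and the recurrence is obtained by conditioning on whether the last generator of the path belongs to the support, exactly as in the paper's proof. Your reformulation via the independence polynomial of the path graph is just a cleaner packaging of the same decomposition.
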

\begin{proof} The equality $A^{CFCI}_{n}(q)=B^{CFCI}_{n}(q)= \tilde{C}^{CFCI}_{n-1}(q)$ comes from Remark~\ref{remarkinv}. Let $w$ be a CFC involution in $A_n$. If $s_{n}$ belongs to $supp(w)$, by Theorem~\ref{finiteness}, $s_{n-1}$ does not belong to $supp$($w$), and $w$ is equal to $s_{n}$ concatenated to a CFC involution of $A_{n-2}$.  If $s_{n}$ does not belong to $supp$($w$), $w$ is a CFC involution of $A_{n-1}$. This yields the expected relation \eqref{equationinva}. The generating function is computed through classical techniques. \end{proof}

In particular, if $q \to 1$, we obtain the number of CFC involutions in type $A_{n-1}$, which is the $(n+1)^{th}$ Fibonacci number.

\begin{Theorem}
In types D and $\tilde{B}$, the following relations hold for all $n\geq 3$:
\begin{equation}\label{equationinvd}D^{CFCI}_{n}(q)= \tilde{B}^{CFCI}_{n-1}(q)=D^{CFCI}_{n-1}(q)+qD^{CFCI}_{n-2}(q),\end{equation}
\begin{equation}\label{equationinvd2}D^{CFCI}_{n+1}(q)=qA^{CFCI}_{n-3}(q) + (1+2q+q^2) A^{CFCI}_{n-2}(q),  \end{equation}
and $D^{CFCI}_{1}(q):=1$, $D^{CFCI}_{2}(q)=1+2q+q^2$, $D^{CFCI}_{3}(q)=1+3q+q^2$.
Moreover, we can compute the generating function $$D^{CFCI}(x):=\sum_{n=0}^\infty D^{CFCI}_{n+1}(q)x^n=\frac{1+(2q+q^2)x}{1-x-qx^2}.$$
\end{Theorem}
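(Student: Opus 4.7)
The plan is to mimic the structure of Theorem~\ref{ainvo}: first use Remark~\ref{remarkinv} to transfer from $\tilde{B}_{n-1}$ to $D_n$, then exploit Theorem~\ref{finiteness} to set up a support decomposition. The equality $D^{CFCI}_{n}(q)=\tilde{B}^{CFCI}_{n-1}(q)$ is immediate from Remark~\ref{remarkinv}, since the Coxeter diagrams of $D_n$ and $\tilde{B}_{n-1}$ have isomorphic underlying graphs (a path of $n-2$ vertices with two extra leaves $t_1,t_2$ attached at one end); only the label of the final edge differs, which is irrelevant by that remark.

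The heart of the argument is~\eqref{equationinvd2}. Writing the generators of $D_{n+1}$ as $t_1,t_2,s_1,\ldots,s_{n-1}$, I would split the CFC involutions of $D_{n+1}$ according to the intersection of $\{t_1,t_2\}$ with $supp(w)$. If neither $t_1$ nor $t_2$ lies in $supp(w)$, then $w$ belongs to the parabolic subgroup $\langle s_1,\ldots,s_{n-1}\rangle\cong A_{n-1}$, giving contribution $A^{CFCI}_{n-1}(q)$. Otherwise, since $m_{t_i s_1}=3$, Theorem~\ref{finiteness} forces $s_1\notin supp(w)$; because $t_1$ and $t_2$ commute with each other and with every $s_i$ for $i\geq 2$, we can write $w=t_1^{a_1}t_2^{a_2}w'$ with $(a_1,a_2)\in\{(1,0),(0,1),(1,1)\}$ and $w'$ a CFC involution in $\langle s_2,\ldots,s_{n-1}\rangle\cong A_{n-2}$, giving contribution $(2q+q^2)A^{CFCI}_{n-2}(q)$. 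Summing and substituting $A^{CFCI}_{n-1}(q)=A^{CFCI}_{n-2}(q)+qA^{CFCI}_{n-3}(q)$ from Theorem~\ref{ainvo} yields~\eqref{equationinvd2}.

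For the recurrence~\eqref{equationinvd}, I would either argue directly in $D_n$ by splitting on whether the ``far'' generator $s_{n-2}$ lies in $supp(w)$ (if not, $w\in D_{n-1}$; if yes, $s_{n-3}$ is forbidden by Theorem~\ref{finiteness} and $w=s_{n-2}w'$ with $w'\in D_{n-2}$), or derive it algebraically by writing~\eqref{equationinvd2} at the indices $n+1,n,n-1$ and combining with the $A$-recurrence. The base cases $D^{CFCI}_2=(1+q)^2$ and $D^{CFCI}_3=1+3q+q^2$ are verified by direct enumeration using Theorem~\ref{finiteness}, and the closed-form expression for $D^{CFCI}(x)$ follows from the linear recurrence and the initial values by standard techniques.

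The main obstacle is the small-$n$ bookkeeping: for $n=3$, the rightmost $s$-generator is simultaneously adjacent to $t_1$ and $t_2$, so excluding its full neighborhood leaves the empty generating set rather than a genuine $D$-type subgroup; this is what forces the convention $D^{CFCI}_1(q):=1$, reflecting the trivial parabolic subgroup, and it is the only case where the otherwise uniform decomposition breaks down.
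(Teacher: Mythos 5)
Your proof is correct and follows essentially the same route as the paper: both reduce CFC involutions to independent sets in the Coxeter graph via Theorem~\ref{finiteness}, identify $\tilde{B}_{n-1}$ with $D_n$ through Remark~\ref{remarkinv}, and decompose by support at the fork of the diagram. The only cosmetic difference is that you condition on $\{t_1,t_2\}\cap supp(w)$ where the paper conditions on whether $s_1\in supp(w)$ (which yields \eqref{equationinvd2} directly, without the extra application of \eqref{equationinva}); like you, the paper then obtains \eqref{equationinvd} algebraically from \eqref{equationinvd2} and \eqref{equationinva}.
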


\begin{proof} To prove \eqref{equationinvd2}, let $w$ be a CFC involution in $D_{n+1}$. If $s_{1}$ belongs to $supp$($w$), by Theorem~\ref{finiteness}, $s_{2}, t_1, t_2$ do not belong to $supp$($w$), and $w$ is equal to $s_{1}$ concatenated to a CFC involution of $A_{n-3}$.  If $s_{1}$ does not belong to $supp$($w$), $w$ is a CFC involution of $A_{n-1}$ concatenated to $t_1$, $t_2$, $t_1t_2$ or nothing. This shows \eqref{equationinvd2}. Equation \eqref{equationinvd} is simply a consequence of \eqref{equationinvd2},  \eqref{equationinva} and Remark~\ref{remarkinv}. The generating function comes from classical techniques.  \end{proof}

If $q \to 1$, we obtain the number of CFC involutions in type $D_{n+1}$, which leads to a Fibonacci-type sequence with starting numbers 1 and 4.

\begin{Theorem} In type $\tilde{A}$, the following relations hold for $n \geq 2$:
\begin{equation}\label{equationinvatilde}\tilde{A}^{CFCI}_{n}(q)=\tilde{A}^{CFCI}_{n-1}(q)+q\tilde{A}^{CFCI}_{n-2}(q),\end{equation}
where $\tilde{A}^{CFCI}_{0}(q):=1$, $\tilde{A}^{CFCI}_{1}(q)=1+2q$.
Therefore, we can compute the generating function 
$$\tilde{A}^{CFCI}(x):=\sum_{n=1}^\infty \tilde{A}^{CFCI}_{n-1}(q)x^n=x\frac{1+2qx}{1-x-qx^2}.$$
\end{Theorem}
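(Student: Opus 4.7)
The plan is to exploit Theorem~\ref{finiteness}, which says that a CFC involution is entirely determined by its support, this support being an independent set of the Coxeter diagram, and its length being equal to the size of that support. Since the Coxeter diagram of $\tilde{A}_{n-1}$ is the cycle $C_n$ on vertices $s_0,\ldots,s_{n-1}$, we have
\[
\tilde{A}^{CFCI}_{n-1}(q) \;=\; \sum_{\substack{I\subseteq\{s_0,\ldots,s_{n-1}\}\\I\text{ independent in }C_n}} q^{|I|}.
\]
My strategy is to first express $\tilde{A}^{CFCI}_{n-1}(q)$ in terms of finite type $A$, then invoke the recurrence \eqref{equationinva} of Theorem~\ref{ainvo} to transfer the path recurrence to the cycle.

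First I would fix the distinguished vertex $s_0$ and split CFC involutions $w$ of $\tilde{A}_{n-1}$ into two cases. If $s_0\notin \mathrm{supp}(w)$, then $w$ lies in the standard parabolic subgroup generated by $\{s_1,\ldots,s_{n-1}\}$, whose Coxeter diagram is a path of type $A_{n-1}$, contributing $A^{CFCI}_{n-1}(q)$. If $s_0\in \mathrm{supp}(w)$, Theorem~\ref{finiteness} forces $s_1,s_{n-1}\notin \mathrm{supp}(w)$ (its two neighbors in the cycle), so $w=s_0\,w'$ with $w'$ a CFC involution on $\{s_2,\ldots,s_{n-2}\}$, a subdiagram of type $A_{n-3}$, contributing $q\,A^{CFCI}_{n-3}(q)$. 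This yields the identity
\[
\tilde{A}^{CFCI}_{n-1}(q)=A^{CFCI}_{n-1}(q)+q\,A^{CFCI}_{n-3}(q) \qquad(\ast)
\]
valid for $n\geq 4$.

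Next I would derive the claimed recurrence by combining $(\ast)$ with \eqref{equationinva}. Writing $(\ast)$ for indices $n-1$ and $n-2$ and adding $q$ times the second to the first, one finds
\[
\tilde{A}^{CFCI}_{n-1}(q)+q\,\tilde{A}^{CFCI}_{n-2}(q)=\bigl[A^{CFCI}_{n-1}(q)+q\,A^{CFCI}_{n-2}(q)\bigr]+q\bigl[A^{CFCI}_{n-3}(q)+q\,A^{CFCI}_{n-4}(q)\bigr],
\]
which by \eqref{equationinva} equals $A^{CFCI}_{n}(q)+q\,A^{CFCI}_{n-2}(q)=\tilde{A}^{CFCI}_{n}(q)$, proving \eqref{equationinvatilde} for large enough $n$.

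Finally I would check the base cases by hand, which also cover the small values of $n$ not directly reached by the decomposition $(\ast)$: the value $\tilde{A}^{CFCI}_0(q)=1$ is a convention, $\tilde{A}^{CFCI}_1(q)=1+2q$ follows from $\tilde{A}_1$ having two non-commuting generators (so the only CFC involutions are $e$, $s_0$, $s_1$), and the predicted $\tilde{A}^{CFCI}_2(q)=1+3q$ and $\tilde{A}^{CFCI}_3(q)=1+4q+2q^2$ are checked directly from independent sets of the triangle and the $4$-cycle. The generating function then follows by standard manipulations: multiplying the recurrence by $x^{n+1}$, summing over $n\geq 2$, and solving the resulting rational relation using the initial conditions. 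No step seems to carry a real obstacle; the only mildly delicate point is to handle carefully the degenerate small cases in the case split, so as to ensure that the recurrence holds starting from $n=2$ as stated.
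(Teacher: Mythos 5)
Your proposal is correct and follows essentially the same route as the paper: split CFC involutions of $\tilde{A}_{n-1}$ according to whether $s_0$ lies in the support, obtain $\tilde{A}^{CFCI}_{n-1}(q)=A^{CFCI}_{n-1}(q)+qA^{CFCI}_{n-3}(q)$ via Theorem~\ref{finiteness}, and combine with \eqref{equationinva} to get \eqref{equationinvatilde}. You merely make explicit the algebraic combination and the small-$n$ checks that the paper leaves to the reader.
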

\begin{Remark}We also have
$$q^n \tilde{A}^{CFCI}_{n-1}(1/q^2)=L_n(q),$$
where $L_n(q)$ is the $n^{th}$ Lucas polynomials, defined explicitely (see sequence A114525 in \cite{SLO}) by 
$L_n(q) =2^{-n}[(q-\sqrt{q^2+4})^n+(q+\sqrt{q^2+4})^n]$. This equality can be proved by using generating functions.
\end{Remark}
\begin{proof}Let $w$ be a CFC involution in $\tilde{A}_{n-1}$. If $s_{0}$ belongs to $supp$($w$), by Theorem~\ref{finiteness}, $s_{n-1}$ and $s_1$ do not belong to $supp$($w$), and $w$ is equal to $s_{0}$ concatenated to a CFC involution of $A_{n-3}$.  If $s_{0}$ does not belong to $supp$($w$), $w$ is a CFC involution of $A_{n-1}$. This leads to the relation:$$\tilde{A}^{CFCI}_{n-1}(q)=qA^{CFCI}_{n-3}(q)+A^{CFCI}_{n-1}(q).$$ Using this and  (\ref{equationinva}), it is easy to deduce (\ref{equationinvatilde}). The generating function comes from classical techniques.  \end{proof}

\begin{Theorem}In type $\tilde{D}$, the following relation holds for $n \geq 4$:
\begin{equation}\label{equationinvdtilde}\tilde{D}^{CFCI}_{n+2}(q)=\tilde{D}^{CFCI}_{n+1}(q)+q\tilde{D}^{CFCI}_{n}(q), \end{equation}
with $\tilde{D}^{CFCI}_{4}(q)=1+5q+6q^2+4q^3+q^4$ and $\tilde{D}^{CFCI}_{5}(q)=1+6q+10q^2+6q^3+q^4$.
We can therefore compute the generating function:
$$\tilde{D}^{CFCI}(x):=\sum_{n=2} ^\infty \tilde{D}^{CFCI}_{n+2}(q)x^n=x^2\frac{1+5q+6q^2+4q^3+q^4+x(q+4q^2+2q^3)}{1-x-qx^2}.$$
\end{Theorem}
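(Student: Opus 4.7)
The plan is to mirror the strategy used just above for the recurrences on CFC involutions in types $D$ and $\tilde B$, by invoking Theorem~\ref{finiteness}. That theorem tells us that in any CFC involution each generator of the support occurs exactly once and that no two non-commuting generators coexist in the support. Combined with Remark~\ref{remarkinv}, this reduces the enumeration of CFC involutions to a weighted count of independent sets of the underlying graph of the Coxeter diagram, the weight being $q^{|\cdot|}$.

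The first step is to partition the CFC involutions $w\in\tilde D_{n+2}$ according to the subset $\{u_1,u_2\}\cap\mathrm{supp}(w)$. In the Coxeter diagram of $\tilde D_{n+2}$, both $u_1$ and $u_2$ are adjacent only to $s_{n-1}$ and commute with each other; therefore, by Theorem~\ref{finiteness}, as soon as one of $u_1,u_2$ belongs to $\mathrm{supp}(w)$, the generator $s_{n-1}$ cannot. The remaining generators $t_1,t_2,s_1,\ldots,s_{n-2}$ form the Coxeter diagram of $D_n$ (for $n\geq 4$). Summing the four disjoint cases (neither $u_i$, only $u_1$, only $u_2$, both) gives
\begin{equation}\label{eqDtildeProp}
\tilde D^{CFCI}_{n+2}(q) \;=\; D^{CFCI}_{n+1}(q) \;+\; (2q+q^2)\,D^{CFCI}_n(q),
\end{equation}
an identity completely analogous to~(\ref{equationinvd2}) in the $D$-case.

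The second step is to derive~(\ref{equationinvdtilde}) from~(\ref{eqDtildeProp}) by eliminating the $D^{CFCI}$ terms. Applying~(\ref{eqDtildeProp}) at the indices $n$, $n-1$, $n-2$ and using the recurrence~(\ref{equationinvd}) for $D^{CFCI}_m$ twice, one checks that the combination $\tilde D^{CFCI}_{n+2}-\tilde D^{CFCI}_{n+1}-q\tilde D^{CFCI}_n$ collapses term by term: both the $D^{CFCI}_{\,\cdot+1}$ piece and the $(2q+q^2)D^{CFCI}_{\,\cdot}$ piece reduce separately to zero through~(\ref{equationinvd}). The restriction $n\geq 4$ is exactly what is needed so that~(\ref{eqDtildeProp}) remains valid at the index $n$ shifted down by $2$, i.e.\ that the sub-diagram obtained by deletion is still of type $D_m$ with $m\geq 2$.

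Finally, I would compute $\tilde D^{CFCI}_4(q)$ and $\tilde D^{CFCI}_5(q)$ directly from~(\ref{eqDtildeProp}) using the small known values of $D^{CFCI}$ (or equivalently by listing independent sets in the two small diagrams), and then extract the generating function by the standard manipulation: multiplying $\tilde D^{CFCI}(x)$ by $1-x-qx^2$ and using~(\ref{equationinvdtilde}) truncates the series to a degree-$3$ polynomial in $x$ whose coefficients, determined by the initial data, yield the numerator $1+5q+6q^2+4q^3+q^4+x(q+4q^2+2q^3)$. No conceptual obstacle is expected: the proof is entirely parallel to the $\tilde B$ case just above, the only mild subtlety being the bookkeeping of the index shifts so that~(\ref{equationinvdtilde}) holds from $n=4$ onward.
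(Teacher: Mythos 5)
Your proposal is correct and follows essentially the same route as the paper: both reduce the count to weighted independent sets of the Coxeter diagram via Theorem~\ref{finiteness}, peel off the fork generators at one end to express $\tilde{D}^{CFCI}_{n+2}(q)$ in terms of type-$D$ counts, and then eliminate those using \eqref{equationinvd}. The paper conditions on whether $s_1$ lies in the support (obtaining $\tilde{D}^{CFCI}_{n+2}(q)=qD^{CFCI}_{n-1}(q)+(1+2q+q^2)D^{CFCI}_{n}(q)$), which is equivalent to your intermediate identity after one application of \eqref{equationinvd}.
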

\begin{proof}  Let $w$ be a CFC involution in $\tilde{D}_{n+2}$. If $s_{1}$ belongs to $supp$($w$), by Theorem~\ref{finiteness},  $s_2$, $t_1$ and $t_2$ do not belong to $supp$($w$), and $w$ is equal to $s_{1}$ concatenated to a CFC involution of $D_{n-1}$.  If $s_{1}$ does not belong to $supp$($w$), $w$ is a CFC involution $D_{n}$ concatenated to $t_1$, $t_2$, $t_1t_2$ or nothing. This leads to the relation:$$\tilde{D}^{CFCI}_{n+2}(q)=qD^{CFCI}_{n-1}(q)+(1+2q+q^2) D^{CFCI}_{n}(q).$$
Use this and (\ref{equationinvd}) to derive (\ref{equationinvdtilde}). 
\end{proof}
If $q \to 1$, we obtain the number of involutions in type $\tilde{D}_{n+2}$, which is a Fibonacci-type sequence with starting numbers 10 and 7.

\section{Other questions}

In Sections~\ref{finite} and \ref{affine}, we obtained two $q$-analogs of the number of CFC elements in finite types. The first, in type $A$, corresponds to permutations avoiding 321 and 3412, taking into account their Coxeter length (see \cite{TEN}). The second, in type $D$, is apparently new. One may wonder if it corresponds to another combinatorial object.
\\We can notice that if $W$ is a finite or affine Coxeter group, the generating function of CFC elements is rational. This is in fact true for all Coxeter groups, as will be proved in the forthcoming work \cite{P}.

\section{Acknowledgements}
We would like to thank M. Macauley for pointing out reference~\cite{MAR} on logarithmic elements, and T. Marquis for valuable remarks on the infinite number of non logarithmic CFC elements. Finally, we also thank P. Nadeau for interesting comments and discussions on an earlier version of this manuscript.

\bibliographystyle{plain}

\end{document}